\newtheorem{Th}{Theorem}[section]
\newtheorem{Prop}[Th]{Proposition}
\newtheorem{Lem}[Th]{Lemma}
\newtheorem{Def}[Th]{Definition}
\newcommand{\R}{\mathbb{R}}
\newcommand{\bR}{\mathbb{R}}
\newcommand{\cC}{{\mathcal C}}
\newcommand{\cD}{{\mathcal D}}
\numberwithin{equation}{section}
\DeclareMathOperator*{\essinf}{ess\,inf}
\newcommand{\twostar}{{2^{\ast}}}
\newcommand{\dvg}{\mathrm{d}v_g}
\newcommand{\FF}{\mathbf{F}}
\renewcommand{\AA}{\mathbf{A}}
\newcommand{\upz}{u_0(p_0)}
\newcommand{\fa}{\mathfrak{a}}
\newcommand{\fb}{\mathfrak{b}}
\newcommand{\fc}{\mathfrak{c}}
\newcommand{\fd}{\mathfrak{d}}
\newcommand{\fh}{\mathfrak{h}}
\title[A Lichnerowicz equation on non-CMC closed manifolds]{A Lichnerowicz equation in the Einstein-scalar field theory on non-CMC closed manifolds}
\author[B. Bieganowski]{Bartosz Bieganowski}
\author[P. d'Avenia]{Pietro d'Avenia}
\author[J. Schino]{Jacopo Schino}
\author[D. Strzelecki]{Daniel Strzelecki}
\address[B. Bieganowski]{\newline\indent
	Faculty of Mathematics, Informatics and Mechanics, \newline\indent
	University of Warsaw, \newline\indent
	ul. Banacha 2, 02-097 Warsaw, Poland}
\email{\href{mailto:bartoszb@mimuw.edu.pl}{bartoszb@mimuw.edu.pl}}
\address[P. d'Avenia]{\newline\indent
	Dipartimento di Meccanica, Matematica e Management, \newline\indent
	Politecnico di Bari, \newline\indent
	Via E. Orabona 4, 70125 Bari, Italy}
\email{\href{mailto:pietro.davenia@poliba.it}{pietro.davenia@poliba.it}}
\address[J. Schino]{\newline\indent
	Faculty of Mathematics, Informatics and Mechanics, \newline\indent
	University of Warsaw, \newline\indent
	ul. Banacha 2, 02-097 Warsaw, Poland}
\email{\href{mailto:j.schino2@uw.edu.pl}{j.schino2@uw.edu.pl}}
\address[D. Strzelecki]{\newline\indent
	Faculty of Mathematics, Informatics and Mechanics, \newline\indent
	University of Warsaw, \newline\indent
	ul. Banacha 2, 02-097 Warsaw, Poland}
\email{\href{mailto:dstrzelecki@mimuw.edu.pl}{dstrzelecki@mimuw.edu.pl}}
\begin{document}

\begin{abstract} 
In the paper, we prove the existence of a positive and essentially bounded solution to a Lichnerowicz equation in the Einstein-scalar field theory on a closed manifold with non-constant mean curvature. In particular, the non-constant mean curvature gives rise to supercritical terms in the equation, on top of singular ones. We employ a recent fixed-point argument, which involves sub- and supersolutions. Additionally, we provide several conditions on the coefficients in the equation that prevent the existence of positive classical solutions.
\end{abstract}

\keywords{Fixed point arguments, singular nonlinearities, Einstein field equations, Lichnerowicz
equation, elliptic problems.}
\subjclass[2020]{58J05,   	
    58J90, 
    35J75, 
    35J61, 
    35Q75, 
    35Q76
    }

\maketitle

\section{Introduction}
To study the Einstein equation in general relativity in a four-dimensional spacetime $\mathcal{V}$ as a Cauchy problem, it is necessary to find appropriate initial data. The Gauss and Codazzi equations, when applied to the Einstein equation, provide the conditions for a metric $g$, a symmetric $(0,2)$-tensor $K$, and a mean curvature $\tau$ in a three-dimensional space manifold $M$ that must be satisfied to ensure that $M$ is the initial state for the study of evolution. These conditions are known as \emph{constraint equations} (for a detailed derivation, see \cite{Isenberg&Bartnik}).

One of the most fruitful techniques in studying constraint equations is a conformal method (also called ``Method B'') introduced by Lichnerowicz \cite{Lichnerowicz}. This method allows for the classification of the constant mean curvature (CMC) solutions of such equations on compact manifolds (see \cite{Isenberg95}) or other types of initial manifolds (see \cite{Maxwell-2009} and references therein). 

The situation becomes more complicated when studying non-constant mean curvature (non-CMC) solutions. This was not a case of interest for many years, since it was thought that each solution of the Einstein equation had a slice with constant mean curvature. In 1988, Bartnik provided an example of a spacetime satisfying the Einstein equation that does not admit any CMC surface, see \cite{Bartnik88}. A general method to create such examples was given in \cite{Chrusciel}. Therefore, the study of non-CMC solutions proved to be an important topic.

In the study of the Einstein field equation, the coupling of gravity with other fields provides the form of the energy-momentum tensor. For instance, this approach leads to the Einstein–Maxwell equations for electromagnetism in curved spacetime.

In this paper, we are focused on the so called Einstein-scalar field theory: the coupling of gravity with a scalar field.
Let $\Psi$ be a real-valued scalar field on an $(N+1)$-dimensional spacetime manifold $\mathcal{V}$ ($N \ge 3$) and let $\gamma$ be a spacetime metric. Consider the action
\[
\mathcal{S}(\Psi,\gamma):=\int_\mathcal{V} \Big[R(\gamma)-\frac{1}{2}|\nabla\Psi|_{\gamma}^2-V(\Psi)\Big] d\eta_{\gamma},
\]
where $R(\gamma)$ is the scalar curvature of $\gamma$, $d\eta_{\gamma}$ is the volume element, $|\nabla\Psi|_{\gamma}^2$ is the squared pseudo-norm of the spacetime gradient of the field $\Psi$ with respect to the metric $\gamma$, and $V\colon\R\to\R$ is a given smooth function.\footnote{For the Klein–Gordon field theory, $V(\Psi) = \frac{1}{2}m^2\Psi^2$.}
Making the variations of $\mathcal{S}$ with respect to $\gamma$ and $\Psi$, we get the Einstein-scalar field equations
\[
\begin{cases}
G_{\alpha\beta}=\nabla_\alpha\Psi\nabla_\beta \Psi -\frac{1}{2}\gamma_{\alpha\beta}\nabla_\mu\Psi\nabla^\mu \Psi - \gamma_{\alpha\beta}V,\\
\nabla_\mu\Psi\nabla^\mu \Psi=\frac{dV}{d\Psi}
\end{cases}
\]
where $G_{\alpha\beta}$ is the Einstein curvature tensor.
The same system was obtained by Rendall \cite{Rendall2} to describe the expansion of the universe (see also \cite{Rendall}). Moreover, in particle physics, scalar fields are used to describe spin-zero particles.

Now, let us consider the constraint equations for this problem, which are conditions for an $N$-dimensional spacelike hypersurface $M$ with a spatial metric $g$, a symmetric tensor $K$, and a mean curvature $\tau$. As noted in \cite{Choquet-Bruhat_2007}, the coupling of a scalar field to the Einstein gravitational field theory does not add any new constraint equations to the theory. In the same paper, Choquet-Bruhat, Isenberg, and Pollack applied  the Lichnerowicz conformal method to reformulate the constraint equations for the Einstein-scalar system into a system of partial differential equations.

In this method, $\Psi$ is split into $\psi$ (its restriction to $M$) and $\pi$ (the normalized time derivative of $\Psi$ restricted to $M$). Introducing the new variables $u$ for a positive scalar function and $W$ for a vector field defined on $M$, one can rescale and transform $\pi$,
$K$, and $g$ (we keep these symbols for the transformed variables) to rewrite the constraint equations as the system

\begin{equation}\label{E:constraints_full_eqaution}
\left\{
  \begin{array}{l}
    \begin{aligned}
      -\Delta_g u &+ \frac{N-2}{4(N-1)} \left(R(g) - |\nabla_g \psi|_g^2\right) u
        - \frac{N-2}{4(N-1)}\left(|\sigma + \cD W|_g^2 + \pi^2\right) u^{-(2^*+1)}\\
      \quad &+ \frac{N-2}{4(N-1)}\left(\frac{N-1}{N} \tau^2 - 4 V(\psi)\right) u^{2^*-1}
        = 0,
    \end{aligned}
    \\[1ex]
    \mathrm{div}_g(\cD W)
      = \frac{N-1}{N} u^{2^*} \nabla_g\tau - \pi \nabla_g\psi,
  \end{array} \text{in } M.
\right.
\end{equation}

In this system, $R(g)$ is the scalar curvature (also called the Ricci scalar), $\sigma$ is the so-called TT-tensor (symmetric, divergence free and trace free), $\cD$ is the conformal Killing operator, $\tau$ represents the mean curvature of $M$, and $V$ is the smooth potential of the Einstein-scalar field theory. 

The standard task in this approach is to verify which choices of the data $(g,\sigma,\tau,\psi,\pi)$ allow solving the constraint equations for $u$ and $W$. Then, from the full set of data $(g,\sigma,\tau,\psi,\pi,W, u)$, by the reverse rescaling, one can obtain the solution of the constraint equations.

In \cite{Choquet-Bruhat_2007} and \cite{Hebey2008}, the authors consider the system $\eqref{E:constraints_full_eqaution}$ in the case where $\tau$ is constant (see also \cite{BK}, where the authors admit more general nonlinearities). In that scenario, the second equation becomes independent of $u$. By solving it for $W$, the problem reduces to studying the first equation for the function $u$, whose nonlinear part consists of Sobolev-critical and negative powers. We also mention \cite{Chrusciel_Gicquaud,PremoselliCVPD} and \cite{Hebey_Veronelli,PremoselliDCDS}, which address, respectively, multiplicity of solutions and asymptotic behavior.

In this paper, from a mathematical perspective, we deal with non-constant $\tau$, making the system \eqref{E:constraints_full_eqaution} fully coupled; in this way, a different point of view is taken on the variables in the constraint equation. We fix the vector field $W$ and regard $u$ and $\psi$ as unknowns. This approach, as will be shown, leads then to a single equation for $u$, independent of $\psi$, with Sobolev-supercritical terms, and we focus on it (i.e., we are interested only in $u$).

From now on, we assume that $(M,g)$ is a closed (i.e., compact and without boundary) Riemannian manifold of dimension $N \ge 3$.

We assume $V \equiv \nu\in\mathbb{R}$. The case of a constant potential can be understood as the situation when the scalar field does not produce a force (the force is the gradient of a potential). Especially, assuming $V\equiv 0$, we study the scalar field of massless particles, for example, the massless Klein--Gordon equation.

Then, for a general $\tau$, possibly non-constant, from the second equation we get

$$
|\nabla_g \psi|_g^2 = \left| \pi^{-1} \left( \frac{N-1}{N} u^{2^*}\nabla_g\tau - \mathrm{div}_g (\cD W) \right) \right|_g^2.
$$
Hence, replacing it in the first equation of the system \eqref{E:constraints_full_eqaution}, $\psi$ disappears, giving the following Lichnerowicz-type equation
\begin{align*}
&-\Delta_g u + \frac{N-2}{4(N-1)} \left( R(g) - \left| \pi^{-1} \left( \frac{N-1}{N} u^{2^*}\nabla_g\tau - \mathrm{div}_g (\cD W) \right) \right|_g^2 \right) u \\
&\qquad - \frac{N-2}{4(N-1)} ( | \sigma + \cD W |_g^2 + \pi^2 ) u^{-(2^*+1)} + \frac{N-2}{4(N-1)}\left(\frac{N-1}{N} \tau^2 - 4 \nu\right)  u^{2^*-1} = 0.
\end{align*}
We set
\begin{align*}
h &:= \frac{N-2}{4(N-1)} R(g), \quad A := \frac{N-2}{4(N-1)} ( | \sigma + \cD W |_g^2 + \pi^2 ),\quad B := \frac{N-2}{4(N-1)}\left(\frac{N-1}{N} \tau^2 - 4 \nu\right),\\
C &:= - \sqrt{\frac{N-2}{4(N-1)}} \pi^{-1} \mathrm{div}_g (\cD W), \quad D:= \sqrt{\frac{N-2}{4(N-1)}} \pi^{-1} \frac{N-1}{N} \nabla_g\tau.
\end{align*}
Then we rewrite the Lichnerowicz equation as
$$
-\Delta_g u + (h - |D u^{2^*} + C|_g^2)u - A u^{-(2^*+1)} + B u^{2^*-1} = 0
$$
or equivalently
\begin{equation}\label{eq:1}
-\Delta_g u + (h-|C|_g^2) u = |D|_g^2 u^{2 \cdot 2^* + 1} + 2 \langle C , D \rangle_g u^{2^*+1} - B u^{2^*-1} + \frac{A}{u^{2^*+1}} \quad\text{in } M,
\end{equation}
which is the equation that we study in this paper. Note that $A$, $B$, $C$, $D$, $h$ are not necessarily constant. In fact, $A,B,h\colon M\to\bR$ and $C,D \colon M\to TM$.

The article is organized as follows. In Section \ref{sec:ex}, we prove the existence of a positive solution to \eqref{eq:1} using a sub/supersolution method followed by the fixed point theorem for monotone operators by Carl and Heikkilä \cite{Carl2002}, which is an extension of the Knaster--Tarski theorem \cite{Tarski}. The main result is formulated in Theorem \ref{th:Main}. We highlight that the presence of supercritical terms makes it unclear how to use a variational approach as in \cite{BK,Hebey2008}.
In Section \ref{sec:non}, we provide sufficient conditions for the nonexistence of $\cC^2$ positive solutions to \eqref{eq:1}.

In what follows, $K$ denotes a generic positive constant that may vary from one line to another.

\section{Existence of a weak solution}\label{sec:ex}

Recall that $M$ is assumed to be an $N$-dimensional closed manifold. To prove the existence of weak solutions of the problem \eqref{eq:1}, we work under the following assumptions
\begin{enumerate}[label=(A\arabic{*}),ref=A\arabic{*}]\setcounter{enumi}{0}
    \item \label{A:coeff_in_proper_spaces}$A,B, |C|_g,|D|_g, h\in L^\infty(M)$,
     \item \label{A:coeff_properties} $\essinf_M A >0$, $\langle C,D\rangle_g\geq 0$.
    \item \label{A:positive_operator} $\essinf_M (h - |C|_g^2) > -\lambda_1$, where $\lambda_1 > 0$ is the first eigenvalue of $-\Delta_g$ on $M$.
    \item \label{A:essinf_of_h} $\essinf_{M} h >\min_{t>0} r(t)$, where 
  $r(t):=\left(\|C\|_{L^\infty}+\|D\|_{L^\infty} t^{\twostar} \right)^2 - \essinf_M B \cdot t^{2^*-2} + \frac{\|A\|_{L^\infty}}{t^{\twostar+2}} $
\end{enumerate}

From now on, we assume that the assumptions (\ref{A:coeff_in_proper_spaces})--(\ref{A:essinf_of_h}) are satisfied.
Below, we define what we mean by weak solution, subsolution and supersolution of our problem.

\begin{Def}
Let $u \in H^1(M) \cap L^\infty(M)$. We say that $u$ is a weak subsolution to \eqref{eq:1} if and only if for every nonnegative $\varphi \in H^1(M)$, there holds $\int_M \frac{A \varphi}{|u|^{2^*+1}} \, \dvg < +\infty$ and

\begin{equation*}
\begin{split}
&\int_M \langle \nabla_g u , \nabla_g \varphi \rangle_g + (h - |C|^2_g) u \varphi \, \dvg\\
&\qquad\le \int_M |D|^2_g |u|^{2\cdot \twostar} u \varphi + 2 \langle C , D \rangle_g |u|^{2^*} u \varphi - B |u|^{\twostar - 2} u \varphi + \frac{A u \varphi}{|u|^{2^*+2}} \, \dvg.
\end{split}
\end{equation*}

Similarly, we say that $u$ is a weak supersolution if and only if $-u$ is a subsolution, and that $u$ is a weak solution if and only if it is both a subsolution and a supersolution.
\end{Def}

The main result of this paper is the following theorem.

\begin{Th}\label{th:Main}
Suppose that (\ref{A:coeff_in_proper_spaces})--(\ref{A:essinf_of_h}) are satisfied. Then \eqref{eq:1} has a weak solution which is positive and bounded away from zero almost everywhere on $M$. If, moreover, the functions in (\ref{A:coeff_in_proper_spaces}) are H\"older continuous, then this solution is of class $\cC^{2,\alpha}$ for some $\alpha<1$.
\end{Th}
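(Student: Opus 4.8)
The plan is to construct an ordered pair of a weak subsolution $\underline{u}$ and a weak supersolution $\overline{u}$, both constant, with $0 < \underline{u} \le \overline{u}$, and then apply the Carl--Heikkil\"a fixed point theorem to the nonlinear operator naturally associated to \eqref{eq:1} on the order interval $[\underline{u},\overline{u}]$. First I would look for constant sub/supersolutions: plugging a constant $t>0$ into the equation kills the Laplacian, so the defining inequality for a constant supersolution becomes, after dividing by $t>0$,
\begin{equation*}
(h - |C|_g^2) \ge |D|_g^2 t^{2\cdot 2^*} + 2\langle C,D\rangle_g t^{2^*} - B t^{2^*-2} + \frac{A}{t^{2^*+2}} \quad \text{a.e. in } M,
\end{equation*}
and I would bound the right-hand side above, uniformly in $M$, using $\langle C,D\rangle_g \ge 0$, $|C|_g \le \|C\|_{L^\infty}$, $|D|_g \le \|D\|_{L^\infty}$, $B \ge \essinf_M B$, $A \le \|A\|_{L^\infty}$, arriving at the quantity $r(t)$ from \eqref{A:essinf_of_h}. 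Assumption \eqref{A:essinf_of_h} says precisely that $\essinf_M h > \min_{t>0} r(t)$, so there is some $\overline{t}>0$ with $\essinf_M(h-|C|_g^2) \ge \essinf_M h - \|C\|_{L^\infty}^2 \ge r(\overline{t})$ — here one has to be a little careful that the $|C|_g^2$ on the left is genuinely absorbed; I expect that the intended reading is that $r(t)$ already contains the $\|C\|_{L^\infty}^2$ term (it does, inside the square), so the comparison is exactly $h \ge$ (sup of RHS), and $\overline{u} \equiv \overline{t}$ is a supersolution. For the subsolution, a sufficiently small constant $\underline{t}>0$ works: as $t \downarrow 0$ the singular term $A/t^{2^*+2} \to +\infty$ while $h-|C|_g^2$ is bounded, so the reversed inequality holds for $\underline{t}$ small, and one may further shrink $\underline{t}$ to ensure $\underline{t} \le \overline{t}$.

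**The fixed point step.** With $0 < \underline{u} \le \overline{u}$ in hand, I would set up the iteration operator. Choose a constant $\mu > 0$ large enough that the map
\begin{equation*}
s \mapsto \mu s + |D|_g^2 s^{2\cdot 2^*+1} + 2\langle C,D\rangle_g s^{2^*+1} - B s^{2^*-1} + A s^{-(2^*+1)}
\end{equation*}
is nondecreasing in $s$ on the interval $[\min \underline{u}, \max \overline{u}]$ — possible since all coefficients are in $L^\infty$ and $s$ ranges over a compact subset of $(0,\infty)$, so the derivative in $s$ is bounded below. Then for $v \in [\underline{u},\overline{u}]$ define $Tv := u$ to be the unique weak solution in $H^1(M)$ of the linear coercive problem
\begin{equation*}
-\Delta_g u + (h - |C|_g^2 + \mu) u = \mu v + |D|_g^2 v^{2\cdot 2^*+1} + 2\langle C,D\rangle_g v^{2^*+1} - B v^{2^*-1} + A v^{-(2^*+1)} \quad \text{in } M;
\end{equation*}
coercivity of the bilinear form comes from \eqref{A:positive_operator} (shifting by $\mu>0$ only helps), the right-hand side is in $L^\infty(M) \subset L^2(M)$ because $v$ is bounded away from $0$ and bounded above, and by elliptic $L^p$ theory $u \in W^{2,p}(M)$ for all $p$, hence $u \in L^\infty(M)$. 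Standard comparison arguments show $T$ maps $[\underline{u},\overline{u}]$ into itself and is monotone: if $\underline{u} \le v_1 \le v_2 \le \overline{u}$ then $Tv_1 \le Tv_2$, and $\underline{u} \le T\underline{u}$, $T\overline{u} \le \overline{u}$ by the sub/supersolution property together with the maximum principle. The Carl--Heikkil\"a theorem \cite{Carl2002} (generalizing Knaster--Tarski \cite{Tarski}) then yields a fixed point $u = Tu \in [\underline{u},\overline{u}]$, which is a weak solution of \eqref{eq:1} with $0 < \essinf_M \underline{u} \le u \le \esssup_M \overline{u}$ a.e. For the regularity addendum: once the coefficients are H\"older continuous, the right-hand side of the equation, evaluated at the bounded-away-from-zero solution $u$, is H\"older continuous, so Schauder theory upgrades $u$ to $\cC^{2,\alpha}$.

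**The main obstacle.** The genuinely delicate point is not the fixed point machinery — that is by now a standard packaging — but verifying that the order interval $[\underline{u},\overline{u}]$ is nonempty, i.e., that the supersolution can be taken \emph{above} the subsolution. The subsolution must be small (to dominate the singular term $A/t^{2^*+2}$ from below) while the supersolution must be large enough that $r(\overline{t}) \le \essinf_M h$; one needs the single condition \eqref{A:essinf_of_h} to reconcile these. The subtlety is that $r$ may have its minimum at some $\overline{t}$ that is itself not small, so one cannot in general take $\underline{t} = \overline{t}$; instead one first fixes $\overline{t}$ realizing (or approximating) $\min_{t>0} r(t) < \essinf_M h$, then separately chooses $\underline{t} \in (0,\overline{t}]$ small enough for the subsolution inequality. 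One should also double-check that $r(t) \to +\infty$ as $t \to 0^+$ and as $t \to +\infty$, so that the minimum over $t > 0$ is attained and the strict inequality in \eqref{A:essinf_of_h} is meaningful; this is immediate from the $t^{-(2^*+2)}$ and $t^{2\cdot 2^*}$ terms dominating at the two ends. A secondary technical care is the finiteness of $\int_M A\varphi/|u|^{2^*+1}\,\dvg$ in the definition of weak sub/supersolution, which for our constants and for the fixed point is automatic since $u$ is bounded away from zero; but one must make sure the linear problems defining $T$ preserve this lower bound, which again follows from the maximum principle applied to $T\underline{u} \ge \underline{u} > 0$.
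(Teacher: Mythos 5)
Your proposal is correct and reaches the same conclusion, but the mechanism behind the fixed-point step is genuinely different from the paper's. You add a large constant $\mu$ so that $s \mapsto \mu s + (\text{full right-hand side})$ becomes pointwise nondecreasing on the compact interval $[\underline{t},\overline{t}]$; you then invert the \emph{linear} coercive operator $-\Delta_g + (h-|C|_g^2+\mu)$ and get an increasing map $T$ on the order interval $[\underline{u},\overline{u}]$. The paper instead decomposes the (truncated) nonlinearity as $f_1 - f_2$ with $f_1(p,\xi) = |D|_g^2\xi^{2\cdot 2^*+1} + 2\langle C,D\rangle_g\xi^{2^*+1} + B^-\xi^{2^*-1}$ and $f_2(p,\xi) = B^+\xi^{2^*-1} - A\xi^{-(2^*+1)}$, both increasing in $\xi$ on $(0,\infty)$; $f_2$ is moved to the left so that $\AA + \FF_2$ is a \emph{nonlinear} strongly monotone operator, inverted via Browder's theorem, with $(\AA+\FF_2)^{-1}$ order-preserving and $\Lambda = (\AA+\FF_2)^{-1}\circ\FF_1$ increasing with globally bounded range on $H^1(M)$ (the interval bound $\vartheta \le u_0 \le \Theta$ is recovered a posteriori, not built in). Your $\mu$-shift trick is the more classical monotone-iteration packaging and sidesteps Browder's theorem, at the price of introducing the auxiliary constant $\mu$ and of handling the interval-invariance $T([\underline u,\overline u])\subset[\underline u,\overline u]$ directly via comparison; the paper's $f_1/f_2$ split is cleaner in that it exploits the sign structure of the nonlinearity (via $B^\pm$ and the fact that $-A\xi^{-(2^*+1)}$ is increasing) without any extra parameter and requires only verifying a bounded range before applying the Carl--Heikkil\"a corollary. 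The sub/supersolution construction, the a priori bounds, and the Schauder-bootstrap regularity step are essentially identical to the paper's. One small slip in your write-up: the chain $\essinf_M(h-|C|_g^2) \ge \essinf_M h - \|C\|_{L^\infty}^2 \ge r(\overline{t})$ is not what (\ref{A:essinf_of_h}) gives — rather $\essinf_M h \ge r(\overline t)$ and the $\|C\|_{L^\infty}^2$ inside $r$ cancels the $-|C|_g^2$ on the left pointwise, exactly as you go on to say — so the self-correction you make in the same sentence is the right reading.
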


To prove Theorem \ref{th:Main}, we first formulate and prove a series of lemmas. In the first one, we prove the existence of a small constant subsolution.

\begin{Lem}
There exists $\delta_0$ such that for all $0 < \vartheta < \delta_0$, the constant function $\vartheta$ is a weak subsolution to \eqref{eq:1}.
\end{Lem}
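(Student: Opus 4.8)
The plan is to use that, for a constant function $u\equiv\vartheta$, the left-hand side of the subsolution inequality is controlled by an $L^\infty$-bound that does not depend on $\vartheta$, whereas the singular term $A\vartheta^{-(2^*+1)}$ on the right-hand side blows up as $\vartheta\to0^+$ and eventually dominates. First I would dispose of the integrability requirement in the definition: for $u\equiv\vartheta>0$ and any nonnegative $\varphi\in H^1(M)$ one has $\int_M \frac{A\varphi}{\vartheta^{2^*+1}}\,\dvg=\vartheta^{-(2^*+1)}\int_M A\varphi\,\dvg<+\infty$, since $A\in L^\infty(M)$ by (\ref{A:coeff_in_proper_spaces}) and $\varphi\in L^1(M)$ because $M$ is compact. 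As $\nabla_g u=0$ and $\varphi\ge0$, the subsolution condition then follows from the pointwise inequality
\begin{equation*}
(h-|C|_g^2)\,\vartheta \;\le\; |D|_g^2\,\vartheta^{2\cdot 2^*+1}+2\langle C,D\rangle_g\,\vartheta^{2^*+1}-B\,\vartheta^{2^*-1}+\frac{A}{\vartheta^{2^*+1}}\qquad\text{a.e. on }M.
\end{equation*}

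Next I would divide by $\vartheta>0$ and estimate both sides using (\ref{A:coeff_in_proper_spaces})--(\ref{A:coeff_properties}): on the left, $h-|C|_g^2\le h\le\|h\|_{L^\infty}$; on the right, $|D|_g^2\vartheta^{2\cdot 2^*}\ge0$, $2\langle C,D\rangle_g\vartheta^{2^*}\ge0$ (here $\langle C,D\rangle_g\ge0$ is used), $-B\vartheta^{2^*-2}\ge-\|B\|_{L^\infty}\vartheta^{2^*-2}$, and $A\vartheta^{-(2^*+2)}\ge(\essinf_M A)\,\vartheta^{-(2^*+2)}$. Thus it is enough to find $\delta_0>0$ with
\begin{equation*}
\|h\|_{L^\infty}\;\le\;\frac{\essinf_M A}{\vartheta^{2^*+2}}-\|B\|_{L^\infty}\,\vartheta^{2^*-2}\qquad\text{for all }0<\vartheta<\delta_0.
\end{equation*}
Since $2^*+2>2^*-2>0$ and $\essinf_M A>0$ by (\ref{A:coeff_properties}), the right-hand side tends to $+\infty$ as $\vartheta\to0^+$, so such a $\delta_0$ exists; one may take it explicitly, e.g.\ $\delta_0=\min\bigl\{1,\,(\essinf_M A/(1+\|h\|_{L^\infty}+\|B\|_{L^\infty}))^{1/(2^*+2)}\bigr\}$.

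There is no substantial obstacle here: the statement is elementary once the integrability condition is handled and the weak inequality is reduced to a pointwise one. The only thing deserving care is the bookkeeping of signs, so that every one-sided estimate points in the correct direction — in particular the use of $\langle C,D\rangle_g\ge0$ and $\essinf_M A>0$ — together with the observation that the exponent $2^*+2$ of the singular term strictly exceeds $2^*-2$, which is precisely what makes $A\vartheta^{-(2^*+2)}$ outweigh the possibly negative term $-\|B\|_{L^\infty}\vartheta^{2^*-2}$ near $\vartheta=0$.
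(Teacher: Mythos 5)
Your proof is correct and takes essentially the same approach as the paper: reduce the weak inequality to a pointwise one for the constant $\vartheta$, discard the nonnegative terms $|D|_g^2\vartheta^{2\cdot2^*}$ and $2\langle C,D\rangle_g\vartheta^{2^*}$ using (\ref{A:coeff_properties}), and observe that $(\essinf_M A)\vartheta^{-(2^*+2)}-\|B\|_{L^\infty}\vartheta^{2^*-2}\to+\infty$ as $\vartheta\to0^+$, hence eventually dominates the bounded left-hand side. The only minor differences are cosmetic: you bound the left side by $\|h\|_{L^\infty}$ rather than the slightly sharper $\|h\|_{L^\infty}-\essinf_M|C|_g^2$, and you additionally record the (trivially satisfied) integrability requirement and an explicit $\delta_0$.
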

\begin{proof}
Note that, for a sufficiently small constant $\vartheta > 0$, we have
\begin{align*}
  \vartheta^{2\cdot 2^*}  |D|_g^2  + 2 \vartheta^{2^*}  \langle C, D \rangle_g  - \vartheta^{2^*-2}  B  + \frac{1}{\vartheta^{2^* + 2}}  A
  &
  \geq - \vartheta^{2^*-2}  B  + \frac{1}{\vartheta^{2^* + 2}}  A\\
  & \geq - \vartheta^{2^*-2}  \|B\|_{L^\infty(M)}  + \frac{1}{\vartheta^{2^* + 2}}  \essinf_M A\\
  & \geq \| h\|_{L^\infty(M)} - \essinf_M |C|_g^2\\
  &\geq h-|C|_g^2
\end{align*}
almost everywhere on $M$. Thus, multiplying it by any nonnegative function $\varphi \in H^1(M)$ and integrating, we complete the proof.
\end{proof}

In the subsequent lemma, we prove the existence of a constant supersolution.

\begin{Lem}
There exists a constant weak supersolution $\Theta$ to \eqref{eq:1}.
\end{Lem}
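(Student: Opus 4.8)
The plan is to reduce the supersolution condition, for a \emph{constant} function $\Theta>0$, to a pointwise inequality that assumption (\ref{A:essinf_of_h}) is tailored to supply. Since a constant function has vanishing Laplacian, and since by definition $\Theta$ is a weak supersolution exactly when $-\Theta$ is a weak subsolution, one unwinds the definition (using $|-\Theta|=\Theta$) and divides by $\Theta>0$ to see that it suffices to exhibit $\Theta>0$ such that
\[
h - |C|_g^2 \ \ge\ |D|_g^2\,\Theta^{2\cdot 2^*} + 2\langle C,D\rangle_g\,\Theta^{2^*} - B\,\Theta^{2^*-2} + \frac{A}{\Theta^{2^*+2}} \qquad\text{a.e.\ on } M .
\]
Once this holds, multiplying by $\Theta\varphi$ with an arbitrary nonnegative $\varphi\in H^1(M)$ and integrating over $M$ gives back exactly the supersolution inequality, and the finiteness requirement $\int_M A\varphi\,\Theta^{-(2^*+1)}\,\dvg<\infty$ is immediate from (\ref{A:coeff_in_proper_spaces}) together with $H^1(M)\hookrightarrow L^1(M)$ on the closed manifold $M$.

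To verify the displayed inequality, I would estimate its right-hand side from above using (\ref{A:coeff_in_proper_spaces})--(\ref{A:coeff_properties}). Pointwise a.e.\ one has $2\langle C,D\rangle_g\le 2|C|_g|D|_g$ by Cauchy--Schwarz, hence
\[
|D|_g^2\,\Theta^{2\cdot 2^*} + 2\langle C,D\rangle_g\,\Theta^{2^*} \ \le\ \bigl(|C|_g+|D|_g\,\Theta^{2^*}\bigr)^2 - |C|_g^2 \ \le\ \bigl(\|C\|_{L^\infty}+\|D\|_{L^\infty}\Theta^{2^*}\bigr)^2 - |C|_g^2 ,
\]
while $-B\le-\essinf_M B$ and $A\le\|A\|_{L^\infty}$; adding these up, the right-hand side is bounded above a.e.\ by $r(\Theta)-|C|_g^2$, with $r$ the function from (\ref{A:essinf_of_h}). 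On the other hand the left-hand side is $h-|C|_g^2\ge\essinf_M h-|C|_g^2$, so the displayed inequality holds (with strict room) provided $r(\Theta)<\essinf_M h$ for some $\Theta>0$ — and this is precisely what (\ref{A:essinf_of_h}) grants (take $\Theta$ at, or close to, a minimizer of $r$; note $r(t)\to+\infty$ as $t\to0^+$ because $\|A\|_{L^\infty}\ge\essinf_M A>0$). Fixing such a $\Theta$ and integrating against $\Theta\varphi\ge 0$ finishes the argument.

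I do not expect a genuine obstacle: this lemma is essentially a bookkeeping step, and (\ref{A:essinf_of_h}) was evidently reverse-engineered to make the constant-supersolution estimate close. The points that call for care are (i) the sign bookkeeping when translating ``$\Theta$ is a supersolution'' into ``$-\Theta$ is a subsolution'', so that after multiplying through by $-1$ the inequality points in the right direction, and (ii) checking that the expansion $\bigl(\|C\|_{L^\infty}+\|D\|_{L^\infty}\Theta^{2^*}\bigr)^2=\|C\|_{L^\infty}^2+2\|C\|_{L^\infty}\|D\|_{L^\infty}\Theta^{2^*}+\|D\|_{L^\infty}^2\Theta^{2\cdot 2^*}$ lines up term-by-term with the definition of $r$.
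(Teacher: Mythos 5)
Your proof is correct and follows essentially the same route as the paper's: reduce the supersolution condition for a positive constant to the pointwise inequality $h - |C|_g^2 \ge |D|_g^2\Theta^{2\cdot 2^*} + 2\langle C,D\rangle_g\Theta^{2^*} - B\Theta^{2^*-2} + A\Theta^{-(2^*+2)}$, bound the right-hand side by $r(\Theta) - |C|_g^2$ (expanding the square and using Cauchy--Schwarz for the cross term, just as the paper does implicitly), and invoke (\ref{A:essinf_of_h}) with $\Theta$ at (or near) the minimizer of $r$. Your remark that one can take $\Theta$ ``at, or close to, a minimizer'' is if anything slightly more careful than the paper's phrasing, which assumes a minimizer of $r$ is attained.
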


\begin{proof}
Take $\Theta>0$ to be a minimizer of the function $r$ defined in (\ref{A:essinf_of_h}).
Then, thanks to (\ref{A:essinf_of_h}),
\begin{align*}
h - |C|_g^2
& \geq \left( \|C\|_{L^\infty} + \|D\|_{L^\infty} \Theta^{2^*} \right)^2 -\essinf_M B\cdot\Theta^{2^*-2}+ \frac{\|A\|_{L^\infty}}{\Theta^{2^*+2}}-|C|_g^2 \\
&= \|C\|_{L^\infty}^2 -|C|_g^2+ \|D\|_{L^\infty}^2 \Theta^{2 \cdot 2^*}+2\|C\|_{L^\infty} \|D\|_{L^\infty} \Theta^{2^*}  -\essinf_M B\cdot \Theta^{2^*-2} + \frac{\|A\|_{L^\infty}}{\Theta^{2^*+2}}  \\
&\geq |D|_g^2 \Theta^{2 \cdot 2^*} + 2\langle C, D\rangle_g \Theta^{2^*}- B \Theta^{2^*-2} + \frac{A}{\Theta^{2^*+2}},
\end{align*}
and multiplying the inequality by any $\varphi \in H^1 (M)$ with $\varphi \geq 0$, we conclude.
\end{proof}

From now on, we fix any constant subsolution $\vartheta$ such that $\vartheta < \Theta$. We introduce the truncated functions $f_1,f_2 \colon M \times \R \rightarrow \R$ by
$$
f_1(p, \xi) := 
\begin{cases}
|D(p)|_g^2 \vartheta^{2 \cdot 2^* + 1} + 2 \langle C(p) , D(p) \rangle_g \vartheta^{2^*+1} + B^-(p) \vartheta^{2^*-1}     & \mbox{ if } \xi < \vartheta\\
|D(p)|_g^2 \xi^{2 \cdot 2^* + 1} + 2 \langle C(p) , D(p) \rangle_g \xi^{2^*+1} + B^-(p) \xi^{2^*-1}    & \mbox{ if } \vartheta \leq \xi \leq \Theta\\
|D(p)|_g^2 \Theta^{2 \cdot 2^* + 1} + 2 \langle C(p) , D(p) \rangle_g \Theta^{2^*+1} + B^-(p) \Theta^{2^*-1}    & \mbox{ if } \xi > \Theta
\end{cases} 
$$
and
$$
f_2(p, \xi) :=
\begin{cases}
B^+(p) \vartheta^{2^*-1} - \frac{A(p)}{\vartheta^{2^*+1}}     & \mbox{ if } \xi < \vartheta\vspace{3pt}\\
B^+(p) \xi^{2^*-1} - \frac{A(p)}{\xi^{2^*+1}}     & \mbox{ if } \vartheta \leq \xi \leq \Theta\vspace{3pt}\\
B^+(p) \Theta^{2^*-1} - \frac{A(p)}{\Theta^{2^*+1}}     & \mbox{ if } \xi > \Theta
\end{cases}, 
$$
where $B^-(p)=-\min\{B(p),0\}$ and $B^+(p)=\max\{B(p),0\}$, and we consider the truncated problem
\begin{equation}\label{eq:truncated}
-\Delta_g u + (h - |C|_g^2) u +f_2(p,u)= f_1(p, u), \quad u \in H^1(M).
\end{equation}
It is clear that, due to (\ref{A:coeff_in_proper_spaces}),
$f_i(\cdot,u(\cdot)) \in H^{-1}(M)$. Thus, we introduce the Nemytskii operators associated with $f_i$, namely, $\FF_i \colon H^1(M) \rightarrow H^{-1} (M)$ given by
$$
[\FF_i(u)](v) := \int_{M} f_i(\cdot, u(\cdot)) v \, \dvg.
$$

\noindent On $H^1(M)$ we consider the partial order
\[
u_1\leq u_2 \ \Longleftrightarrow  \ (u_1-u_2)^+=0\quad\text{a.e. on } M.
\]
Then on $H^{-1}(M)$ we define the partial order by
\[
h_1 \le h_2 \
\Longleftrightarrow \ h_1(v) \leq h_2(v) \quad \forall v \in H^1(M) \text{ such that }v \geq 0.
\]
We say that an operator $\FF \colon H^1(M) \rightarrow H^{-1}(M)$ is increasing if and only if
$$
u_1 \leq u_2 \quad \Longrightarrow \quad \FF(u_1) \leq \FF(u_2).
$$
In the spirit of this definition we have the following.

\begin{Lem}
$\FF_i$ are increasing operators.
\end{Lem}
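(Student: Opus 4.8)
The plan is to verify monotonicity by inspecting the $\xi$-dependence of the truncated functions $f_1$ and $f_2$ pointwise, then transfer the pointwise inequality to $H^{-1}(M)$ via the definition of the order there. Since $[\FF_i(u)](v) = \int_M f_i(\cdot,u(\cdot))\,v\,\dvg$, and by definition $\FF_i(u_1) \le \FF_i(u_2)$ means $[\FF_i(u_1)](v) \le [\FF_i(u_2)](v)$ for all nonnegative $v \in H^1(M)$, it suffices to show that whenever $u_1 \le u_2$ a.e.\ on $M$ we have $f_i(p,u_1(p)) \le f_i(p,u_2(p))$ for a.e.\ $p \in M$; then multiply by $v \ge 0$ and integrate.

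The heart of the matter is thus the elementary claim: for a.e.\ fixed $p \in M$, the function $\xi \mapsto f_i(p,\xi)$ is nondecreasing on $\R$. I would argue this separately for $i=1$ and $i=2$. For $f_1$: outside $[\vartheta,\Theta]$ the function is constant, and on $[\vartheta,\Theta]$ it equals
\[
|D(p)|_g^2\,\xi^{2\cdot 2^*+1} + 2\langle C(p),D(p)\rangle_g\,\xi^{2^*+1} + B^-(p)\,\xi^{2^*-1},
\]
which is a sum of terms of the form (nonnegative coefficient)$\,\cdot\,\xi^k$ with $k>0$; here nonnegativity of the coefficients uses $\langle C,D\rangle_g \ge 0$ from (\ref{A:coeff_properties}) and $B^-\ge 0$ by definition. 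Each such term is nondecreasing in $\xi>0$, so the sum is. It remains only to check continuity at the junction points $\xi=\vartheta$ and $\xi=\Theta$ — the three branches agree there by construction — so the globally defined $f_1(p,\cdot)$ is continuous and nondecreasing. For $f_2$: again it is constant outside $[\vartheta,\Theta]$, and on $[\vartheta,\Theta]$ it equals $B^+(p)\,\xi^{2^*-1} - A(p)\,\xi^{-(2^*+1)}$. The first term is nondecreasing in $\xi>0$ since $B^+\ge 0$ and $2^*-1>0$; the second term, $-A(p)\,\xi^{-(2^*+1)}$, is nondecreasing because $A(p) > 0$ a.e.\ by (\ref{A:coeff_properties}) and $\xi\mapsto \xi^{-(2^*+1)}$ is decreasing on $(0,\infty)$, so its negation times a positive constant is increasing. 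Hence $f_2(p,\cdot)$ is continuous and nondecreasing as well.

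Having established that $f_i(p,\cdot)$ is nondecreasing for a.e.\ $p$, the rest is immediate: if $u_1 \le u_2$ in the order on $H^1(M)$, i.e.\ $u_1 \le u_2$ a.e.\ on $M$, then for a.e.\ $p$ we get $f_i(p,u_1(p)) \le f_i(p,u_2(p))$, and therefore for any nonnegative $v \in H^1(M)$,
\[
[\FF_i(u_1)](v) = \int_M f_i(\cdot,u_1(\cdot))\,v\,\dvg \le \int_M f_i(\cdot,u_2(\cdot))\,v\,\dvg = [\FF_i(u_2)](v),
\]
which says exactly $\FF_i(u_1) \le \FF_i(u_2)$ in $H^{-1}(M)$. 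I do not anticipate a genuine obstacle here; the only point requiring a little care is the sign bookkeeping — making sure that the truncation into $B^+$ and $B^-$ is precisely what makes both $f_1$ and $f_2$ monotone (the "bad" sign of $-B\,\xi^{2^*-1}$ having been moved into $f_1$ as $+B^-\xi^{2^*-1}$, while the "good" part $-B^+\xi^{2^*-1}$ stays with $f_2$ but on the correct side of the equation), and that the singular term $A/\xi^{2^*+1}$ enters $f_2$ with the sign that renders $-A(p)\xi^{-(2^*+1)}$ increasing rather than decreasing. Once these signs are checked, the proof is complete.
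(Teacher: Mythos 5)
Your proof is correct and follows essentially the same route as the paper's: pointwise monotonicity of $\xi\mapsto f_i(p,\xi)$ (which the paper simply attributes to (\ref{A:coeff_properties}) without spelling it out) followed by integration against a nonnegative test function. You have merely filled in the term-by-term sign check that the paper leaves implicit; the logical content is identical.
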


\begin{proof}
Note that by the assumption (\ref{A:coeff_properties}), for almost every $p\in M$ and $\xi_1,\xi_2\in \bR$ such that $\xi_2\geq \xi_1$ we have
\[
f_i(p,\xi_2)\geq f_i(p,\xi_1),\quad i=1,2.
\]
Let $u_1,u_2\in H^1(M)$ be such that $u_2\geq u_1$. Then
\[
[\FF_i(u_2)](v)-[\FF_i(u_1)](v)=\int_M \left(f_i(\cdot,u_2(\cdot))-f_i(\cdot,u_1(\cdot))\right) v \, \dvg\geq 0,\quad i=1,2,
\]
and the proof is complete.
\end{proof}

Define the operator $\mathbf{A} \colon H^1(M) \rightarrow H^{-1}(M)$ by
$$
[\mathbf{A}(u)](v) = \int_{M} \nabla_g u \nabla_g v + (h-|C|_g^2) uv \, \dvg, \quad u,v \in H^1(M).
$$
Note that (\ref{A:positive_operator}) implies that
\begin{equation}\label{eq:A-positive-definite-kappa}
[\AA (u)](u)\geq \kappa \|u\|^2_{H^1(M)}, \quad u \in H^1(M),
\end{equation}
for some constant $\kappa > 0$.

Observe that weak solutions of \eqref{eq:truncated} are functions $u \in H^1(M)$
such that
\begin{equation}\label{eq:functional_equation}
\AA(u) + \FF_2(u) = \FF_1(u).
\end{equation}

\begin{Lem}
The operator $\AA+\FF_2 \colon H^1(M)\to H^{-1}(M)$ is bijective. Furthermore, its inverse \linebreak $(\AA+\FF_2)^{-1}\colon H^{-1}(M)\to H^1(M)$ is increasing.
\end{Lem}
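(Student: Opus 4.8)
The plan is to recognize $\AA+\FF_2$ as a strongly monotone, coercive, and continuous (hence hemicontinuous) operator from the reflexive Hilbert space $H^1(M)$ to its dual $H^{-1}(M)$, apply the Browder--Minty surjectivity theorem, and combine it with the injectivity that strong monotonicity provides; the monotonicity of $(\AA+\FF_2)^{-1}$ I would then obtain from a comparison argument. First I would record that $\AA$ is linear, bounded, and, by \eqref{eq:A-positive-definite-kappa}, strongly monotone: $[\AA(u_1-u_2)](u_1-u_2)\ge\kappa\|u_1-u_2\|_{H^1(M)}^2$. Next, since $f_2(p,\cdot)$ is non-decreasing for a.e.\ $p\in M$ (as shown in the proof that $\FF_i$ is increasing), I would deduce
\[
[\FF_2(u_1)-\FF_2(u_2)](u_1-u_2)=\int_M\big(f_2(\cdot,u_1)-f_2(\cdot,u_2)\big)(u_1-u_2)\,\dvg\ge 0,
\]
so $\FF_2$ is monotone and $\AA+\FF_2$ is strongly monotone; in particular it is injective, so its inverse, once surjectivity is known, is single-valued.

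For the remaining hypotheses of Browder--Minty: the truncation (recall $\vartheta>0$) together with (\ref{A:coeff_in_proper_spaces}) makes $f_2$ globally bounded, so $|[\FF_2(u)](u)|\le K\|u\|_{L^1(M)}\le K\|u\|_{H^1(M)}$ and therefore $[(\AA+\FF_2)(u)](u)\ge\kappa\|u\|_{H^1(M)}^2-K\|u\|_{H^1(M)}$, which yields coercivity. Moreover $f_2$ is a bounded Carathéodory function, so the associated Nemytskii map is continuous from $L^2(M)$ to $L^2(M)$ (via a.e.\ convergence along subsequences and dominated convergence); composing with the embeddings $H^1(M)\hookrightarrow L^2(M)\hookrightarrow H^{-1}(M)$ shows $\FF_2$, hence $\AA+\FF_2$, is continuous. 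Browder--Minty then gives surjectivity, and together with injectivity we conclude that $\AA+\FF_2$ is a bijection with a well-defined inverse $(\AA+\FF_2)^{-1}\colon H^{-1}(M)\to H^1(M)$. (As an alternative, one could instead minimize the strictly convex, coercive, $\mathcal C^1$ functional $u\mapsto\tfrac12[\AA(u)](u)+\int_M\Big(\int_0^{u(p)}f_2(p,s)\,ds\Big)\,\dvg-g(u)$.)

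To show that $(\AA+\FF_2)^{-1}$ is increasing, I would take $g_1\le g_2$ in $H^{-1}(M)$, set $u_i:=(\AA+\FF_2)^{-1}(g_i)$, subtract the identities $\AA(u_i)+\FF_2(u_i)=g_i$, and test with $\varphi:=(u_1-u_2)^+\ge 0$. Using the chain rule for the positive part ($\nabla_g\varphi=\nabla_g(u_1-u_2)$ a.e.\ on $\{u_1>u_2\}$ and $\nabla_g\varphi=0$ a.e.\ elsewhere), the bilinear part reduces to $[\AA(\varphi)](\varphi)$, so that
\[
[\AA(\varphi)](\varphi)+\int_M\big(f_2(\cdot,u_1)-f_2(\cdot,u_2)\big)\varphi\,\dvg=(g_1-g_2)(\varphi)\le 0.
\]
The integral is nonnegative because $f_2(\cdot,u_1)\ge f_2(\cdot,u_2)$ on $\{\varphi>0\}$, whence $\kappa\|\varphi\|_{H^1(M)}^2\le[\AA(\varphi)](\varphi)\le 0$, so $\varphi=0$, i.e.\ $u_1\le u_2$.

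The main obstacle I anticipate is not a single deep step but the careful bookkeeping: verifying the Browder--Minty hypotheses (continuity of $\FF_2$ and coercivity of the sum — both painless only because the truncation keeps $f_2$ bounded and $\xi$ bounded away from $0$), and, in the last step, justifying the manipulation of the $H^{-1}$--$H^1$ duality pairing with the test function $(u_1-u_2)^+$ through the Stampacchia-type identity for $\nabla_g(u_1-u_2)^+$, so that \eqref{eq:A-positive-definite-kappa} can be legitimately invoked for $\varphi$.
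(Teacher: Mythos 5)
Your proposal is correct and follows essentially the same route as the paper: both use strong monotonicity from \eqref{eq:A-positive-definite-kappa} together with monotonicity of $\FF_2$, establish continuity of $\FF_2$ via dominated convergence, verify coercivity, invoke Browder's theorem for monotone operators to get bijectivity, and then prove the inverse is order-preserving by testing the difference of the two equations against $(u_1-u_2)^+$ (your chain-rule identity $[\AA(u_1-u_2)]((u_1-u_2)^+)=[\AA((u_1-u_2)^+)]((u_1-u_2)^+)$ is the same decomposition the paper obtains via $[\AA(u^+)](u^-)=0$). The only cosmetic difference is that for coercivity you appeal to the global boundedness of the truncated $f_2$ directly, whereas the paper first uses monotonicity to drop to $[\FF_2(0)](u)$; both lead to the same lower bound.
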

\begin{proof}
It is clear that $\AA$ is linear and continuous. We will show that $\FF_2$ is continuous. Take any sequence $(u_n) \subset H^1(M)$ such that $u_n \to u$ in $H^1(M)$. Let $v \in H^1(M)$ with $\|v\|_{H^1} \le 1$. From the Sobolev embedding, there exists $K > 0$ independent of $v$ or $n$ such that, using H\"older's inequality,
\begin{equation*}
\begin{split}
\left| [\FF_2(u_n) - \FF_2(u)](v) \right| & \le \int_{M} \left| f_2(p,u_n(p)) - f_2(p,u(p)) \right| |v| \, \dvg\\
&\le K \left( \int_{M} \left| f_2(p,u_n(p)) - f_2(p,u(p)) \right|^{2} \, \dvg \right)^{1/2}.
\end{split}
\end{equation*}
Since $f_2$ is continuous in the second variable, $f_2(p,u_n(p)) \to f_2(p,u(p))$ for a.e. $p \in M$. In addition, there exists $K > 0$ independent on $n$ such that
\begin{equation*}
\left| f_2(\cdot,u_n(\cdot)) - f_2(\cdot,u(\cdot)) \right|^2 \le K \left( (B^+)^{2} + A^{2} \right) \in L^1(M)
\end{equation*}
in view of (\ref{A:coeff_in_proper_spaces}). 
Then the Lebesgue dominated convergence theorem allows us to conclude.

One can see that, for all $u,v\in H^1(M)$, there holds
$[\FF_2(u)-\FF_2(v)](u-v)\geq0$, and therefore, by \eqref{eq:A-positive-definite-kappa},
\[
\begin{split}
    \left[(\AA+\FF_2)(u)-(\AA+\FF_2)(v)\right](u-v)&=[\AA(u-v)](u-v)+\left[\FF_2(u)-\FF_2(v)\right](u-v) \\&\geq [\AA(u-v)](u-v)\geq \kappa \|u-v\|^2_{H^1(M)},
\end{split}
\]
namely, $\AA + \FF_2$ is a strongly monotone operator, see \cite[p. 475]{Zeidler}.
Additionally, taking $v = 0$, we see that there exists $K>0$ such that for every $u\in H^1(M)$
\begin{equation}\label{eq:coer}
\left[(\AA+\FF_2)(u)\right](u) \geq \kappa \|u\|^2_{H^1(M)} + [\FF_2(0)](u) \ge  \kappa \|u\|^2_{H^1(M)} - K \|u\|_{H^1(M)},
\end{equation}
so
$\AA+\FF_2$ is coercive (see \cite[p. 472]{Zeidler}).
Hence, from Browder's theorem for monotone operators
(see \cite[Theorem 26.A (c)]{Zeidler}), the inverse $(\AA+\FF_2)^{-1} \colon H^{-1}(M)\to H^1(M)$ exists.

To show that $(\AA+\FF_2)^{-1}$ is increasing, take $h_1,h_2 \in H^{-1}(M)$ such that $h_2\geq h_1$
and put $u_i=(\AA+\FF_2)^{-1}(h_i)\in H^1(M)$. Since
\[
\left[\FF_2(u_1)-\FF_2(u_2)\right](u_1-u_2)^+=\int_M 
\left(f_2(\cdot,u_1(\cdot))-f_2(\cdot,u_2(\cdot))\right)(u_1-u_2)\mathbbm{1}_{\{u_1\geq u_2\}} \, \dvg \geq 0,
\]
$\mathbf{A}(u^+)(u^-)=0$, and from \eqref{eq:A-positive-definite-kappa}, testing the condition $h_2\geq h_1$ with the function $v=(u_1-u_2)^+\geq 0$, we get
\begin{equation}\label{e:A+F_2_coerc}
\begin{split}
0&\geq h_1(v)-h_2(v) =[\AA(u_1-u_2)](u_1-u_2)^+ + \left[\FF_2(u_1)-\FF_2(u_2)\right](u_1-u_2)^+\\
&\geq [\AA(u_1-u_2)^+](u_1-u_2)^+ - [\AA(u_1-u_2)^-](u_1-u_2)^+ =[\AA(u_1-u_2)^+](u_1-u_2)^+\\
&\geq \kappa\|(u_1-u_2)^+\|^2_{H^1(M)}.
\end{split}
\end{equation}
So $(u_1-u_2)^+=0$, i.e., $u_2\geq u_1$, and the proof is completed.
\end{proof}

We reformulate \eqref{eq:functional_equation} as a fixed point problem
\begin{align*}
u = \Lambda(u) := [(\AA + \FF_2)^{-1} \circ \FF_1](u).
\end{align*}
To get the existence of a fixed point of $\Lambda$, in the next Lemma, we will use a variant of the fixed point theorem from \cite{Carl2002}.

\begin{Lem}\label{lem:existence}
There exists a fixed point $u_0 \in H^1(M)$ of $\Lambda$, namely, $\Lambda(u_0)=u_0$. In particular, $u_0$ is a solution to \eqref{eq:functional_equation}.
\end{Lem}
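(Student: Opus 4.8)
\textbf{Proof strategy for Lemma \ref{lem:existence}.}
The plan is to apply the fixed point theorem for increasing operators on ordered Banach spaces due to Carl and Heikkil\"a \cite{Carl2002}, using the previously constructed constant sub- and supersolutions $\vartheta$ and $\Theta$ as the endpoints of the order interval on which we search for a fixed point. The operator $\Lambda = (\AA+\FF_2)^{-1}\circ\FF_1$ is a composition of two increasing maps (by the two preceding lemmas), hence increasing; so the only real work is to verify that $\Lambda$ maps the order interval $[\vartheta,\Theta]:=\{u\in H^1(M):\vartheta\le u\le\Theta\text{ a.e.}\}$ into itself, and that the abstract hypotheses of the Carl--Heikkil\"a theorem (monotone sequences in the interval have suprema/infima, or the appropriate compactness/continuity replacement) are met.

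First I would check the invariance $\Lambda([\vartheta,\Theta])\subseteq[\vartheta,\Theta]$. Take $u\in H^1(M)$ with $\vartheta\le u\le\Theta$ and set $w=\Lambda(u)$, i.e.\ $(\AA+\FF_2)(w)=\FF_1(u)$. To show $w\ge\vartheta$: since $\vartheta$ is a constant subsolution of \eqref{eq:1}, and because on the range $[\vartheta,\Theta]$ the truncations are inactive, one shows $(\AA+\FF_2)(\vartheta)\le\FF_1(\vartheta)\le\FF_1(u)=(\AA+\FF_2)(w)$; here the first inequality uses the subsolution property together with the fact that $f_1$ collects the ``good'' terms $|D|_g^2\xi^{2\cdot 2^*+1}+2\langle C,D\rangle_g\xi^{2^*+1}+B^-\xi^{2^*-1}$ while $f_2$ collects $B^+\xi^{2^*-1}-A\xi^{-(2^*+1)}$, and the second uses that $\FF_1$ is increasing and $u\ge\vartheta$. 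Since $(\AA+\FF_2)^{-1}$ is increasing, $w\ge\vartheta$. Symmetrically, using that $\Theta$ is a constant supersolution, $(\AA+\FF_2)(\Theta)\ge\FF_1(\Theta)\ge\FF_1(u)$, whence $w\le\Theta$. A point requiring care is that ``$\vartheta$ is a subsolution of \eqref{eq:1}'' must be translated into ``$(\AA+\FF_2)(\vartheta)\le\FF_1(\vartheta)$ in the sense of $H^{-1}$'', which is exactly where the sign split $B=B^+-B^-$ and the constancy of $\vartheta$ (so that the singular term $A\vartheta^{-(2^*+1)}$ is finite and the weak-subsolution integrability condition is trivially satisfied) come in.

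Next I would set up the iteration that the theorem of \cite{Carl2002} guarantees: starting from $u^{(0)}=\vartheta$, the sequence $u^{(n+1)}=\Lambda(u^{(n)})$ is monotone nondecreasing and lies in $[\vartheta,\Theta]$ by invariance and monotonicity. It is bounded in $L^\infty$, hence, testing \eqref{eq:functional_equation} appropriately and using the coercivity estimate \eqref{eq:coer}, bounded in $H^1(M)$; along a subsequence it converges weakly in $H^1(M)$ and a.e.\ to some $u_0$, and by monotone convergence the pointwise limit is in fact the supremum of the sequence in $H^1(M)$. Passing to the limit in $(\AA+\FF_2)(u^{(n+1)})=\FF_1(u^{(n)})$ — using continuity of $\AA$, the dominated convergence argument for $\FF_1,\FF_2$ (the truncations make $f_1,f_2$ bounded on $[\vartheta,\Theta]$, so dominated convergence applies verbatim as for $\FF_2$ earlier) — yields $(\AA+\FF_2)(u_0)=\FF_1(u_0)$, i.e.\ $\Lambda(u_0)=u_0$.

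\textbf{Main obstacle.} The delicate step is the passage to the limit giving $\Lambda(u_0)=u_0$ rather than merely a weak cluster point: one must upgrade weak $H^1$ convergence of the monotone iterates to strong $H^1$ convergence (or otherwise justify that $u_0$ is a genuine fixed point). The natural route is to test the equation for $u^{(n+1)}-u_0$ against itself and use the strong monotonicity \eqref{eq:A-positive-definite-kappa} of $\AA+\FF_2$ together with $\FF_1(u^{(n)})\to\FF_1(u_0)$ strongly in $H^{-1}(M)$ (which follows from the boundedness of $f_1$ on the order interval and a.e.\ convergence via dominated convergence), giving $\kappa\|u^{(n+1)}-u_0\|_{H^1}^2\le [\FF_1(u^{(n)})-\FF_1(u_0)](u^{(n+1)}-u_0)\to 0$. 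This also reconciles the order-theoretic form of the Carl--Heikkil\"a fixed point theorem (existence of the least fixed point in $[\vartheta,\Theta]$ as the limit of the increasing Picard iterates) with the functional-analytic setting here.
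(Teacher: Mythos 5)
Your route is genuinely different from the paper's. The paper does not restrict $\Lambda$ to the order interval at all: it observes that $(H^1(M),\le)$ is a reflexive Banach semilattice, that $\Lambda$ is increasing, and that the truncation makes $\Lambda(H^1(M))$ a bounded subset of $H^1(M)$ via the chain
\[
\kappa\|u\|^2_{H^1(M)}-K\|u\|_{H^1(M)}\le[(\AA+\FF_2)(u)](u)=[\FF_1(v)](u)\le\|f_1(\cdot,\Theta)\|_{L^\infty(M)}\|u\|_{L^1(M)}\le K\|u\|_{H^1(M)},
\]
and then invokes \cite[Corollary~2.2]{Carl2002} directly; the confinement $\vartheta\le u_0\le\Theta$ is proved afterwards, separately, in Lemma~\ref{lem:bound}. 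Your plan --- invariance of $[\vartheta,\Theta]$ followed by a monotone Picard iteration --- is more constructive and bundles both steps. Your invariance check is correct: the weak-subsolution inequality for the constant $\vartheta$ is exactly the statement $(\AA+\FF_2)(\vartheta)\le\FF_1(\vartheta)$ in $H^{-1}(M)$ (this is where the split $B=B^+-B^-$ between $f_1$ and $f_2$ earns its keep), so with $\FF_1$ and $(\AA+\FF_2)^{-1}$ increasing one gets $\Lambda(u)\ge\vartheta$, and symmetrically $\Lambda(u)\le\Theta$. What the paper's route buys is brevity (no iteration, no limit passage); what yours buys is that it yields the bound $\vartheta\le u_0\le\Theta$ for free and exhibits $u_0$ as the least fixed point in the interval.

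There is, however, a circularity in your resolution of the limit passage. The inequality $\kappa\|u^{(n+1)}-u_0\|^2_{H^1(M)}\le[\FF_1(u^{(n)})-\FF_1(u_0)](u^{(n+1)}-u_0)$ is obtained from strong monotonicity by writing the right-hand side as $[(\AA+\FF_2)(u^{(n+1)})-(\AA+\FF_2)(u_0)](u^{(n+1)}-u_0)$ and then substituting $(\AA+\FF_2)(u_0)=\FF_1(u_0)$; but that identity is precisely the fixed-point relation you are trying to establish. The standard fix is to apply strong monotonicity to two iterates rather than to an iterate and the putative limit,
\[
\kappa\|u^{(n+1)}-u^{(m+1)}\|^2_{H^1(M)}\le[\FF_1(u^{(n)})-\FF_1(u^{(m)})](u^{(n+1)}-u^{(m+1)}),
\]
then use that $\FF_1(u^{(n)})$ is Cauchy in $H^{-1}(M)$ (dominated convergence, since the iterates lie in $[\vartheta,\Theta]$ and converge a.e.\ by monotonicity) and that the iterates are bounded in $H^1(M)$; this shows $(u^{(n)})$ is Cauchy in $H^1(M)$, hence strongly convergent, and one passes to the limit in $(\AA+\FF_2)(u^{(n+1)})=\FF_1(u^{(n)})$ without any circular appeal. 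An even shorter alternative: from the iteration identity, $\AA(u^{(n+1)})=\FF_1(u^{(n)})-\FF_2(u^{(n+1)})$ converges strongly in $H^{-1}(M)$, and since $\AA$ is coercive (hence has a bounded inverse by Lax--Milgram), $u^{(n+1)}$ converges strongly in $H^1(M)$, after which the limit passage is immediate.
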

\begin{proof}
Observe that $(H^1(M), \leq)$ is a Banach semilattice (see \cite[Definition 2.1, Lemma 2.1]{Carl2002}) and a reflexive Hilbert space. The operator $\Lambda \colon H^1(M)\to H^1(M)$ is increasing. We will show that there exists a radius $r > 0$ such that
$$\Lambda (H^1(M)) \subset
\left\{ u \in H^1(M) \ : \ \|u\|_{H^1(M)} \leq r \right\}.
$$
For this purpose, take $v \in H^1(M)$ and let $u = \Lambda(v) \in H^1(M)$. Then, from \eqref{eq:coer}, since $B, |C|_g, |D|_g  \in L^{\infty}(M)$,
\begin{align*}
   \kappa \|u\|_{H^1(M)}^2 - K \|u\|_{H^1(M)} &\leq  [(\AA+\FF_2)(u)](u) =  [\FF_1(v)](u) \\
    &= \int_M  f_1(\cdot, v(\cdot)) u \, \dvg  \leq \int_M  |f_1(\cdot, \Theta)| |u| \, \dvg \\
    &\leq \|f_1(\cdot,\Theta)\|_{L^\infty(M)} \|u\|_{L^1(M)} \leq K \|u\|_{H^1(M)}.
\end{align*}
Therefore $\|u\|_{H^1(M)} \leq r$ for some $r > 0$. Then, by \cite[Corollary 2.2]{Carl2002}, there exists $u_0 \in H^1(M)$ such that $\Lambda(u_0)=u_0$.
\end{proof}

\begin{Lem}\label{lem:bound}
Every solution $u \in H^1(M)$ of \eqref{eq:functional_equation} satisfies $\vartheta \leq u \leq \Theta$. In particular, $u_0 \in L^\infty (M)$ and is a weak solution to \eqref{eq:1}.
\end{Lem}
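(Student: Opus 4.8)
The plan is to show the two inequalities $\vartheta \le u$ and $u \le \Theta$ separately, in each case by testing the equation \eqref{eq:functional_equation} with a suitable nonnegative function built from the positive part of the difference between $u$ and the relevant constant, and exploiting the definition of the truncations $f_1,f_2$ together with the constant sub/supersolution inequalities already established. Once $\vartheta \le u \le \Theta$ a.e.\ is known, on this range the truncated nonlinearities $f_1,f_2$ coincide with the genuine ones, so $u$ solves \eqref{eq:1} weakly; moreover $u \ge \vartheta > 0$ immediately gives the integrability $\int_M A\varphi/|u|^{2^*+1}\,\dvg < +\infty$ needed in the definition of weak (sub/super)solution, and $u \le \Theta$ gives $u \in L^\infty(M)$.

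For the lower bound, I would test \eqref{eq:functional_equation} with $v = (\vartheta - u)^+ \ge 0$. On the set $\{u < \vartheta\}$ both $f_1(\cdot,u)$ and $f_2(\cdot,u)$ are frozen at the values corresponding to the constant $\vartheta$, which are precisely the right- and left-hand sides appearing in the subsolution inequality for $\vartheta$; subtracting the weak formulation of ``$\vartheta$ is a subsolution'' tested against the same $v$, the nonlinear contributions cancel on $\{u<\vartheta\}$ and vanish on $\{u\ge\vartheta\}$ (where $v=0$). What remains is $[\AA(u-\vartheta)](v) = [\AA((u-\vartheta)^-)](-(\vartheta-u)^+)$-type terms; using $\AA(w^+)(w^-)=0$ together with the coercivity estimate \eqref{eq:A-positive-definite-kappa} one obtains $-\kappa\|(\vartheta-u)^+\|_{H^1(M)}^2 \ge 0$, forcing $(\vartheta-u)^+=0$. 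Care is needed because $\AA$ also contains the zeroth-order term $(h-|C|_g^2)u$ and $\vartheta$ is a constant, not the zero function, so I would write $u-\vartheta$ as the argument and keep track of the linear term $(h-|C|_g^2)\vartheta$ on both sides of the subtraction; it matches exactly.

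For the upper bound the argument is symmetric: test with $v = (u-\Theta)^+ \ge 0$, note that on $\{u > \Theta\}$ the truncations freeze $f_1,f_2$ at the values associated with the constant $\Theta$, and subtract the weak supersolution inequality for $\Theta$ (which, by the definition of supersolution, is the reverse inequality with $-\Theta$ a subsolution, i.e.\ the inequality proved in the second lemma) tested against the same $v$; again the nonlinear terms cancel and one is left with $\kappa\|(u-\Theta)^+\|_{H^1(M)}^2 \le 0$. I expect the main subtlety — and the step to state carefully rather than the one that is genuinely hard — to be the bookkeeping of which pieces of $f_1$ versus $f_2$ are monotone in which direction and the verification that, on the truncation plateaus, the frozen values line up exactly with the constant sub/supersolution inequalities (this is the reason $B$ was split as $B^+ - B^-$ and distributed between $f_1$ and $f_2$ with the right signs). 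No new analytic input beyond \eqref{eq:A-positive-definite-kappa} and $\AA(w^+)(w^-)=0$ is required.
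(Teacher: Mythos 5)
Your proposal is correct and follows essentially the same route as the paper's proof: test \eqref{eq:functional_equation} against $(u-\Theta)^+$ (resp.\ $(\vartheta-u)^+$), subtract the weak super-(resp.\ sub-)solution inequality for the constant, observe that the truncated nonlinearities freeze on the support of the test function so those contributions drop out, and finish with $[\AA(w^+)](w^-)=0$ together with \eqref{eq:A-positive-definite-kappa}. The only cosmetic difference is that you note the $\FF_2$ difference vanishes exactly on the relevant set, while the paper discards it via the monotonicity estimate already used in \eqref{e:A+F_2_coerc}; both give the same conclusion.
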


\begin{proof}
Let $u \in H^1(M)$ be any solution of \eqref{eq:functional_equation}. Then
\begin{align*}
[(\AA+\FF_2)(u)] (u-\Theta)^+  = [\FF_1(u)]  (u-\Theta)^+ .
\end{align*}
Since $\Theta$ is a supersolution of \eqref{eq:1}, and so also a supersolution of \eqref{eq:truncated}, we have
$$0\geq-[(\AA+\FF_2)(\Theta)](u-\Theta)^+ +[\FF_1(\Theta)] (u-\Theta)^+ .$$
Hence, using the fact that $\FF_1(u)=\FF_1(\Theta)$ when $u\geq \Theta$ and arguing as in \eqref{e:A+F_2_coerc}, we obtain
\begin{align*}
    0&=[(\AA+\FF_2)(u)] (u-\Theta)^+  - [\FF_1(u)]  (u-\Theta)^+ \\
    &\geq [(\AA+\FF_2)(u)-(\AA+\FF_2)(\Theta)]  (u-\Theta)^+  - [\FF_1(u)-\FF_1(\Theta)]  (u-\Theta)^+\\
    &= [(\AA+\FF_2)(u)-(\AA+\FF_2)(\Theta)]  (u-\Theta)^+ \geq \kappa \|(u-\Theta)^+ \|^2_{H^1(M)}.
\end{align*}
So we conclude $(u-\Theta)^+=0$, i.e., $u\leq \Theta$. 

The proof of the condition $u\geq\vartheta$ is analogous.
\end{proof}

\begin{proof}[Proof of Theorem \ref{th:Main}]
The first part of the statement follows directly from Lemmas \ref{lem:existence} and \ref{lem:bound}.

Recall that the solution $u$ satisfies $0 < \vartheta \leq u \leq \Theta$. Observe that $u$ is a weak solution of 
\[
-\Delta_g u=\chi u,
\]
where $\chi := - (h-|C|_g) + |D|_g^2 u^{2 \cdot 2^*}  + 2 \langle C,D \rangle_g u^{2^*}  - B u^{2^*-2}  + \frac{A}{u^{2^*+2}}$.
Since $\chi u \in L^\infty(M)$, from a bootstrap argument (cf. \cite[Theorem 9.9]{MR1814364}), we know that $u \in W^{2,q}(M)$ for every $q < \infty$, and from Sobolev embeddings, $u \in \cC^{1,\alpha} (M)$ for some $\alpha < 1$. Note that $\chi$ is H\"older continuous, and thus $\chi u$ is H\"older continuous as well. Therefore, we obtain $u \in \cC^{2,\alpha} (M)$ for some $\alpha < 1$ (cf. \cite[p. 271]{Struwe}).
\end{proof}

\section{Nonexistence of smooth positive solutions}\label{sec:non}

Throughout this section, we assume that $A,B,|C|_g,|D|_g, h$ are continuous functions such that $A>0$, $\langle C,D\rangle_g\geq 0$ and $|D|_g>0$ on $M$. 

As in \cite[Section 2]{Hebey2008}, assume that the problem \eqref{eq:1} has a positive solution $u_0$ of class $\cC^2$. Since the domain $M$ is compact, there exists $p_0\in M$ such that $u_0(p_0)>0$ is the minimum of $u_0$. Then $\Delta_g u_0(p_0)\geq 0$, so we can write
\[
\begin{split}
(h(p_0)-|C(p_0)|_g^2) \upz \geq& |D(p_0)|_g^2 \upz^{2 \cdot 2^* + 1} + 2 \langle C(p_0) , D(p_0) \rangle_g \upz^{2^*+1}\\
&- B(p_0) \upz^{2^*-1} + \frac{A(p_0)}{\upz^{2^*+1}}\\
\geq& |D(p_0)|_g^2 \upz^{2 \cdot 2^* + 1} - B(p_0) \upz^{2^*-1} + \frac{A(p_0)}{\upz^{2^*+1}}.
\end{split}
\]
We divide by $u_0(p_0)>0$ and substitute 
\[
z=\upz^{\frac{2}{N}\twostar}=\upz^{\twostar-2},\quad
\mathfrak{a}=A(p_0),\quad \mathfrak{b}=B(p_0),\quad \mathfrak{c}=|C(p_0)|_g^2,\quad \mathfrak{d}=|D(p_0)|_g^2, \quad \mathfrak{h}=h(p_0).
\]
Note that
\[
\upz^{2 \cdot \twostar}=z^N\quad\text{and}\quad 
\upz^{-(\twostar+2)}=z^{-\frac{N}{2\cdot\twostar}(\twostar+2)}=z^{-(N-1)}.
\]
Therefore, we can rewrite the inequality above as
\begin{equation}\label{existence_inequality}
\mathfrak{h}-\mathfrak{c} \geq \mathfrak{d} z^N -\mathfrak{b} z+\mathfrak{a} z^{-N+1}.
\end{equation}

If we consider conditions on the functions $A,\,B,\,C,\,D,\, h$ such that \eqref{existence_inequality} cannot hold, then we cannot have classical positive solutions of the problem.

One can easily see that if $B(p)\leq 0$ and $h(p)-|C(p)|_g^2 \leq 0$ for all $p\in M$, then the inequality \eqref{existence_inequality} cannot hold. Note that under these conditions, Theorem \ref{th:Main} does not apply. Indeed, if (\ref{A:essinf_of_h}) holds, then
\[
\essinf_{M} h>\left(\|C\|_{L^\infty}+\|D\|_{L^\infty} t^{\twostar} \right)^2 - \essinf_M B \cdot t^{2^*-2} + \frac{\|A\|_{L^\infty}}{t^{\twostar+2}}
\geq \|C\|_{L^\infty}^2,
\]
so $h(p)-|C(p)|^2_g>0$ for all $p\in M$.

Since the inequality \eqref{existence_inequality} involves several parameters, it is unclear what
an {\em optimal} condition on the functions $A,B,|C|_g,|D|_g, h$ ensuring that \eqref{existence_inequality} cannot hold
could be. Thus, in the following, we provide some {\em global} conditions on the functions $A,B,|C|_g,|D|_g, h$ that imply that \eqref{existence_inequality} cannot hold, i.e., ensure that there are no positive $\cC^2$ solutions.

\begin{Prop}
Let $(M,g)$ be a smooth closed manifold of dimension $N\geq 3$. Let $A,B,|C|_g,|D|_g, h$ be continuous functions on $M$ such that $A>0$, $\langle C,D\rangle_g\geq 0$ and $|D|_g>0$ on $M$. The equation \eqref{eq:1} does not have $\cC^2$ positive solutions if one of the following conditions holds:
\begin{enumerate}[label=(\arabic*),ref=\arabic*]
    \item \label{NE1} $B(p)\leq 0$ for $p \in M$ and
    \[
\max_{p\in M} \frac{\bigl(h(p)-|C(p)|_g^2\bigr) A(p)^\frac{N-2}{2N-1}}{|D(p)|_g^\frac{2(N-2)}{2N-1} \bigl((2N-1) N^\frac{1}{2N-1} A(p)^\frac{2N-2}{2N-1} |D(p)|_g^\frac{2}{2N-1} - (N-1)^\frac{2N}{2N-1} B(p)\bigr)} < \frac{1}{N^\frac{N+1}{2N-1} (N-1)^\frac{N-1}{2N-1}},
    \]
    \item \label{NE2} $h(p) - |C(p)|_g^2 < 0$ for $p \in M$ and 
    \[
    \max_{p\in M \setminus B^{-1}(0)} \frac{\left(h(p)-|C(p)|_g^2\right)
    |D(p)|_g^{\frac{2(N+1)}{2N-1}} A(p)^{\frac{N-2}{2N-1}}}
    {B(p)^2} < -\frac{(N+1)^{\frac{N+1}{2N-1}}(N-2)^{\frac{N-2}{2N-1}}}{4(2N-1)},
\]
    \item \label{NE3} $h(p) - |C(p)|_g^2 < 0$ for $p \in M$ and 
    \[
\max_{p\in M \setminus B^{-1}(0)}  \frac{\left(h(p)-|C(p)|^2_g\right) |D(p)|_g^\frac{2}{N-1}}{|B(p)|^\frac{N}{N-1}}\leq -\frac{N-1}{N^\frac{N}{N-1}},
\]
    \item \label{NE4}$|D(p)|_g^{\frac{4N}{2N-1}} A(p)^{\frac{2N-2}{2N-1}}-B(p)^2>0 $ for $p\in M$ and
    \[
    \max_{p\in M} \frac{\left(h(p)-|C(p)|_g^2\right)|D(p)|_g^{\frac{2(N+1)}{2N-1}}A(p)^{\frac{N-2}{2N-1}}}
    {|D(p)|_g^{\frac{4N}{2N-1}} A(p)^{\frac{2N-2}{2N-1}}-B(p)^2} \leq \frac{2N-1}{2(N-1)^{\frac{N-1}{2N-1}}N^{\frac{N}{2N-1}}},
    \]
    \item \label{NE5} $B(p)>0$, $|D(p)|_g^{\frac{2(N-1)}{2N-1}+\frac{2}{N-1}} A(p)^{\frac{N}{2N-1}}- B(p)^\frac{N}{N-1}>0$ for $p\in M$ and
    \[
    \max_{p\in M} \frac{\left(h(p)-|C(p)|^2_g\right) |D(p)|_g^\frac{2}{N-1}}{|D(p)|_g^{\frac{2(N-1)}{2N-1}+\frac{2}{N-1}} A(p)^{\frac{N}{2N-1}}- B(p)^\frac{N}{N-1}}
    \leq \frac{2N-1}{2(N-1)^{\frac{N-1}{2N-1}}N^{\frac{N}{2N-1}}}.
    \]
\end{enumerate} 
\end{Prop}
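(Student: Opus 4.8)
The strategy is to rule out the inequality \eqref{existence_inequality}, namely
\[
\mathfrak{h}-\mathfrak{c} \geq \mathfrak{d} z^N -\mathfrak{b} z+\mathfrak{a} z^{-N+1},
\]
uniformly over $z>0$, for each of the five sets of hypotheses. The common mechanism is: the right-hand side, viewed as a function $\phi(z):=\mathfrak{d} z^N -\mathfrak{b} z+\mathfrak{a} z^{-N+1}$ on $(0,\infty)$, is bounded below by some explicit positive quantity $m(\mathfrak{a},\mathfrak{b},\mathfrak{d})$ (or, when $\mathfrak b>0$, by a quantity that can be negative), and if $\mathfrak{h}-\mathfrak{c}<m(\mathfrak{a},\mathfrak{b},\mathfrak{d})$ everywhere then \eqref{existence_inequality} fails at every point, giving the nonexistence. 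Concretely, at the point $p_0$ where the minimum $\upz$ is attained we have $\mathfrak{h}=h(p_0)$, etc., and the hypothesis in each case is precisely $\mathfrak h-\mathfrak c<\min_{z>0}\phi(z)$ after one rewrites the stated ratio. So the whole proof reduces to five lower-bound estimates for $\phi$.

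For the estimates themselves I would split according to the sign of $\mathfrak b$. When $\mathfrak b\le 0$ (case \eqref{NE1}), one has $\phi(z)\ge \mathfrak d z^N+\mathfrak a z^{-N+1}$ (dropping $-\mathfrak b z\ge 0$), and the minimum of $t\mapsto \mathfrak d t^N+\mathfrak a t^{-N+1}$ is found by calculus: setting the derivative to zero gives $z_*^{2N-1}=\tfrac{(N-1)\mathfrak a}{N\mathfrak d}$, and substituting back yields a closed-form minimum of the shape $c_N\,\mathfrak a^{(N-1)/(2N-1)}\mathfrak d^{N/(2N-1)}$ with $c_N=N^{-(N-1)/(2N-1)}(N-1)^{-(N-1)/(2N-1)}\cdot(\text{something})$; comparing with the displayed fraction (whose denominator also carries the extra $-(N-1)^{2N/(2N-1)}B$ term that reflects the finer bound $\phi(z)\ge \mathfrak d z^N-\mathfrak b z+\mathfrak a z^{-N+1}$ kept fully) reproduces condition \eqref{NE1}. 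When $h-|C|_g^2<0$ everywhere the task is easier: now it suffices to find \emph{any} $z$ making $\phi(z)\ge 0$, or to bound $\phi$ below by a (possibly negative) number and compare; cases \eqref{NE2}, \eqref{NE3} use the grouping $\mathfrak d z^N+\mathfrak a z^{-N+1}\ge \text{(AM--GM or critical-point value)}$ against $\mathfrak b z$, while \eqref{NE3} specifically balances $\mathfrak d z^N$ against $|\mathfrak b| z$ by AM--GM with weights $\tfrac1N,\tfrac{N-1}{N}$, giving $\mathfrak d z^N+(N-1)\,(\text{term})\ge N(\mathfrak d)^{1/N}|\mathfrak b|^{(N-1)/N}z$, hence the exponent $N/(N-1)$ on $|B|$. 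Cases \eqref{NE4}, \eqref{NE5} handle $\mathfrak b>0$ by first absorbing part of $\mathfrak d z^N$ or $\mathfrak a z^{-N+1}$ into cancelling $-\mathfrak b z$ via a weighted AM--GM, leaving a residual power problem of the same type as \eqref{NE1}; the sign conditions $|D|_g^{4N/(2N-1)}A^{(2N-2)/(2N-1)}-B^2>0$ etc. are exactly what guarantees the residual coefficient is positive so that the minimum is attained and positive.

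In each case, after producing the lower bound $\phi(z)\ge \Phi(\mathfrak a,\mathfrak b,\mathfrak d)$, I would divide the displayed hypothesis through to see it says $(\mathfrak h-\mathfrak c)\cdot(\text{positive stuff})<(\text{const})\cdot(\text{positive stuff})$, i.e. $\mathfrak h-\mathfrak c<\Phi$; since this holds at $p_0$ with $\Phi=\min_{z>0}\phi(z)$ and our $z=\upz^{\twostar-2}>0$ is a legitimate point, \eqref{existence_inequality} is violated, a contradiction. One cosmetic point: the exponents on $A,|D|_g$ in the statements ($\tfrac{N-2}{2N-1}$, $\tfrac{2(N-2)}{2N-1}$, $\tfrac{2(N+1)}{2N-1}$, $\tfrac{4N}{2N-1}$, $\tfrac{2N-2}{2N-1}$, $\tfrac{2}{N-1}$, etc.) arise purely by clearing denominators after the substitution $z_*^{2N-1}=\text{const}$, so the bookkeeping is a matter of tracking powers of $2N-1$ and $N-1$.

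\textbf{Main obstacle.} The real work is case \eqref{NE1} and the two hybrid cases \eqref{NE4}, \eqref{NE5}, where the three-term function $\mathfrak d z^N-\mathfrak b z+\mathfrak a z^{-N+1}$ genuinely has no elementary closed-form minimizer once all three coefficients are active. I expect to handle this not by solving the critical-point equation exactly but by choosing a \emph{good explicit test value} of $z$ (e.g. $z=(\tfrac{(N-1)\mathfrak a}{N\mathfrak d})^{1/(2N-1)}$, the minimizer of the two-term part) and then estimating the leftover $-\mathfrak b z$ in terms of the other data, or by a weighted AM--GM that splits $\mathfrak d z^N$ into $\tfrac{1}{N}\mathfrak d z^N+\cdots$ so that one clump cancels $\mathfrak b z$ and the rest controls $\mathfrak a z^{-N+1}$. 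Getting the constants to match the (presumably sharp) constants in the statement — the factors $N^{(N+1)/(2N-1)}(N-1)^{(N-1)/(2N-1)}$, $(N+1)^{(N+1)/(2N-1)}(N-2)^{(N-2)/(2N-1)}/(4(2N-1))$, and so on — will require choosing the AM--GM weights optimally, and that optimization is where any slip would occur; everything else is routine calculus and algebra.
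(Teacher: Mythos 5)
Your overall strategy is exactly the paper's: at the minimum point $p_0$ of the putative $\cC^2$ solution one gets the pointwise inequality $\mathfrak h-\mathfrak c\ge\mathfrak d z^N-\mathfrak b z+\mathfrak a z^{-N+1}$ for some $z>0$, so it suffices to lower-bound the right-hand side uniformly in $z$ and contradict the displayed hypotheses. Your treatment of cases \eqref{NE2}--\eqref{NE5} also matches the paper in spirit: \eqref{NE2} is weighted Young's with $\gamma=\frac{N+1}{2N-1}$ followed by minimizing the quadratic; \eqref{NE3} can be done by dropping $\mathfrak a z^{-N+1}>0$ and minimizing $\mathfrak d z^N-\mathfrak b z$ (which in fact is slightly cleaner than the paper's route); \eqref{NE4}--\eqref{NE5} split $f=\tfrac12(\mathfrak d z^N+\mathfrak a z^{-N+1})+(\tfrac{\mathfrak d}{2}z^N-\mathfrak b z+\tfrac{\mathfrak a}{2}z^{-N+1})$ and bound the halves separately.

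The genuine gap is in case \eqref{NE1}. Your proposed mechanism --- ``choose a good explicit test value $z_0=\bigl(\tfrac{(N-1)\mathfrak a}{N\mathfrak d}\bigr)^{1/(2N-1)}$ and estimate the leftover $-\mathfrak b z_0$'' --- runs the inequality in the wrong direction: evaluating $\phi$ at any particular $z_0$ gives $\phi(z_0)\ge\min_z\phi$, i.e.\ an \emph{upper} bound on the minimum, whereas you need a \emph{lower} bound on $\min_z\phi$ to force $\mathfrak h-\mathfrak c<\phi(z)$ for every $z$. Your fallback, a three-term weighted AM--GM with the exponent constraint $N\alpha_1+\alpha_2-(N-1)\alpha_3=0$, is underdetermined and produces a multiplicative bound of the form $\mathfrak d^{\alpha_1}(-\mathfrak b)^{\alpha_2}\mathfrak a^{\alpha_3}$, not the additive, linear-in-$\mathfrak b$ expression appearing in the denominator of condition~\eqref{NE1}. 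The paper's actual device is a monotonicity argument built on the stationarity equation: the minimizer $\bar z$ satisfies $N\mathfrak d\bar z^{2N-1}-\mathfrak b\bar z^N-(N-1)\mathfrak a=0$, so (i) when $\mathfrak b\le0$ one reads off $\bar z^{2N-1}\le\tfrac{(N-1)\mathfrak a}{N\mathfrak d}$, and (ii) the minimum value can be re-expressed as $f(\bar z)=\tfrac{(2N-1)\mathfrak a-(N-1)\mathfrak b\bar z}{N\bar z^{\,N-1}}$, which is a \emph{decreasing} function of $\bar z$ when $\mathfrak b\le0$; plugging in the upper bound for $\bar z$ then gives the correct lower bound for $f(\bar z)$. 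This ``re-express the critical value via the stationarity relation and exploit its monotonicity'' step is the missing idea; it is also what the paper uses (with the alternative re-expression $f(\bar z)=\tfrac{2N-1}{N-1}\mathfrak d\bar z^N-\tfrac{N}{N-1}\mathfrak b\bar z$ and a lower bound on $\bar z$) in its version of case \eqref{NE3}. Without it, your sketch cannot recover the stated constant in \eqref{NE1}.

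Two minor slips worth noting: in your AM--GM closed form for the two-term minimum the exponents should read $\mathfrak a^{N/(2N-1)}\mathfrak d^{(N-1)/(2N-1)}$, and for \eqref{NE3} you should remember that the strict positivity $\mathfrak a z^{-N+1}>0$ is what converts the $\ge$ from the two-term minimum into the strict $>$ needed to contradict the non-strict $\le$ in the hypothesis.
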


\begin{proof}
Let $f(z):=\fd z^N -\fb z+\fa z^{-N+1}$ for $z>0$. Since
\[
\lim_{z\to 0^+} f(z) = 
\lim_{z\to +\infty}f(z) = +\infty
\]
and
\[
f''(z)=N(N-1)\fd z^{N-2}+N(N-1)\fa z^{-N-1}>0,
\]
$f$ has a unique minimum point $\bar{z}$, which satisfies
\begin{equation}\label{derf}
N\fd \bar{z}^{2N-1}-\fb\bar{z}^N-(N-1)\fa=0.
\end{equation}
Thus, using \eqref{derf} in the definition of $f$, we get
\begin{equation} \label{f2}
f(\bar{z})
=\frac{(2N-1)\fa-(N-1)\fb\bar{z}}{N\bar{z}^{N-1}}
\end{equation}
 or
\begin{equation} \label{f3}
f(\bar{z})
=\frac{2N-1}{N-1}\fd\bar{z}^N-\frac{N}{N-1}\fb\bar{z}.
\end{equation}
\textbf{Condition (\ref{NE1}).}
Since $\fb\leq 0$, from \eqref{derf} we have
\[
0 
\geq N \fd \bar{z}^{2N-1}-(N-1) \fa,
\]
namely,
\[
\bar{z}^{2N-1}\leq\frac{(N-1)\fa}{N\fd}.
\]
Note that, since $\fb\leq 0$, the right-hand side of \eqref{f2} is decreasing in $\bar{z}$, therefore
\begin{align*}
f(\bar{z})
&\geq
\frac{(2N-1)\fa-(N-1)\fb\left(\frac{(N-1)\fa}{N\fd}\right)^\frac{1}{2N-1}}{N\left(\frac{(N-1)\fa}{N\fd}\right)^\frac{N-1}{2N-1}}
=
\fd^\frac{N-2}{2N-1}\frac{(2N-1)N^\frac{1}{2N-1}\fa^\frac{2N-2}{2N-1}\fd^\frac{1}{2N-1}-(N-1)^\frac{2N}{2N-1}\fb }{N^\frac{N+1}{2N-1}(N-1)^\frac{N-1}{2N-1}\fa^\frac{N-2}{2N-1}}.
\end{align*}
Thus, by \eqref{existence_inequality},
\[
\fh-\fc\geq \fd^\frac{N-2}{2N-1}\frac{(2N-1)N^\frac{1}{2N-1}\fa^\frac{2N-2}{2N-1}\fd^\frac{1}{2N-1}-(N-1)^\frac{2N}{2N-1}\fb }{N^\frac{N+1}{2N-1}(N-1)^\frac{N-1}{2N-1}\fa^\frac{N-2}{2N-1}},
\]
which is in contradiction with the assumption of this case.

\noindent \textbf{Condition (\ref{NE2}).} We apply Young's inequality  with the weight $\gamma=\frac{N+1}{2N-1}$ to get
\begin{equation*}
\fd z^N -\fb z+\fa z^{-(N-1)}\geq \left(\frac{\fd}{\gamma}\right)^{\gamma}
\left(\frac{\fa}{1-\gamma}\right)^{1-\gamma}z^{\gamma N-(N-1)(1-\gamma)}-\fb z
=\left(\frac{\fd}{\gamma}\right)^{\gamma}
\left(\frac{\fa}{1-\gamma}\right)^{1-\gamma}z^{2}-\fb z.
\end{equation*}
The right-hand side is a quadratic polynomial, so we easily find the lower bound
\begin{equation}\label{E:b>0_Young_estimate}
\fd z^n -\fb z+\fa z^{-(N-1)}\geq -\frac{\fb^2}{4\left(\frac{\fd}{\gamma}\right)^{\gamma}
\left(\frac{\fa}{1-\gamma}\right)^{1-\gamma}}
=-\frac{\fb^2 (N+1)^{\frac{N+1}{2N-1}}(N-2)^{\frac{N-2}{2N-1}}}{4(2N-1)\fd^{\frac{N+1}{2N-1}}\fa^{\frac{N-2}{2N-1}}}.
\end{equation}
As a consequence, by \eqref{existence_inequality}, we have the bound
\begin{equation*}
    \fh-\fc\geq -\frac{\fb^2 (N+1)^{\frac{N+1}{2N-1}}(N-2)^{\frac{N-2}{2N-1}}}{4(2N-1)\fd^{\frac{N+1}{2N-1}}\fa^{\frac{N-2}{2N-1}}}
\end{equation*}
obtaining a contradiction.

\noindent \textbf{Condition (\ref{NE3}).} 
Assume $\fb>0$. 

The right-hand side of \eqref{f3} is increasing in $\bar{z}$ when
\[
\bar{z}\geq \left(\frac{\fb}{(2N-1)\fd}\right)^\frac{1}{N-1}.
\]
Observe that, by \eqref{derf} and since $\fa>0$,
\[
0
< N\fd\bar{z}^{2N-1}-\fb\bar{z}^N,
\]
namely,
\[
\bar{z} > \left(\frac{\fb}{N\fd}\right)^\frac{1}{N-1}.
\]
Since
\[
\left(\frac{\fb}{(2N-1)\fd}\right)^\frac{1}{N-1}
<
\left(\frac{\fb}{N\fd}\right)^\frac{1}{N-1},
\]
then
\begin{equation}\label{E:estim_x}
f(\bar{z}) > \frac{2N-1}{N-1}\fd\left(\frac{\fb}{N\fd}\right)^\frac{N}{N-1}-\frac{N}{N-1}\fb\left(\frac{\fb}{N\fd}\right)^\frac{1}{N-1}
=
-\frac{N-1}{N^\frac{N}{N-1}}
\frac{\fb^\frac{N}{N-1}}{\fd^\frac{1}{N-1}}.
\end{equation}
Hence, from \eqref{existence_inequality},
\[
\frac{(\fh-\fc)\fd^\frac{1}{N-1}}{|\fb|^\frac{N}{N-1}}
=\frac{(\fh-\fc)\fd^\frac{1}{N-1}}{\fb^\frac{N}{N-1}}
> -\frac{N-1}{N^\frac{N}{N-1}}{\color{red}.} {\color{blue},}
\]
which is impossible under the assumptions of this case.\\
If $\fb\leq 0$, then from \eqref{existence_inequality} we have
\[
\fh-\fc\geq \fd z^N -\fb z+\fa z^{-N+1} \geq \fd z^N +\fa z^{-N+1} \geq 0,
\]
and we get a contradiction.

\noindent \textbf{Condition (\ref{NE4}).} 
Let $\alpha=\frac{N-1}{2N-1}$. Applying Young's inequality, we have
\begin{equation*}
\begin{split}
\fd z^N+\fa z^{-N+1}&=\alpha\left(\frac \fd \alpha z^N\right)+(1-\alpha)\left(\frac{\fa}{1-\alpha}z^{-N+1}\right)\geq \left(\frac \fd\alpha z^N\right)^{\alpha}\cdot \left(\frac{\fa}{1-\alpha}z^{-N+1}\right)^{1-\alpha}\\
&=\frac{\fd^{\alpha}\fa^{1-\alpha}}{\alpha^{\alpha}(1-\alpha)^{1-\alpha}}
z^{N\alpha-(N-1)(1-\alpha)}=\frac{(2N-1)\fd^{\frac{N-1}{2N-1}} \fa^{\frac{N}{2N-1}}}{(N-1)^{\frac{N-1}{2N-1}}N^{\frac{N}{2N-1}}}.
\end{split}
\end{equation*}
Combining this estimation with \eqref{E:b>0_Young_estimate} we get
\begin{equation}\label{E:case3_estimation}
\begin{split}
\fd z^N -\fb z+\fa z^{-N+1}&=\frac 12\left(\fd z^N + \fa z^{-N+1}\right)+\left(\frac{\fd}{2}z^N -\fb z+\frac{\fa}{2} z^{-N+1}\right)\\
&\geq \frac{(2N-1)\fd^{\frac{N-1}{2N-1}} \fa^{\frac{N}{2N-1}}}{2(N-1)^{\frac{N-1}{2N-1}}N^{\frac{N}{2N-1}}}-\frac{\fb^2 (N+1)^{\frac{N+1}{2N-1}}(N-2)^{\frac{N-2}{2N-1}}}{4(2N-1)\left(\frac 12 \fd\right)^{\frac{N+1}{2N-1}}\left(\frac 12 \fa\right)^{\frac{N-2}{2N-1}}}\\
&=\frac{(2N-1)}{2(N-1)^{\frac{N-1}{2N-1}}N^{\frac{N}{2N-1}}} \fd^{\frac{N-1}{2N-1}} \fa^{\frac{N}{2N-1}}\\
&\qquad
-\frac{(N+1)^{\frac{N+1}{2N-1}}(N-2)^{\frac{N-2}{2N-1}}}{2(2N-1)}\cdot \frac{\fb^2}{\fd^{\frac{N+1}{2N-1}}\fa^{\frac{N-2}{2N-1}}}.
\end{split}
\end{equation}
By Young's inequality, for positive numbers $x\neq y$ we have 

\[
x^{\frac{x}{x+y}}y^{\frac{y}{x+y}}
< \frac{x^2}{x+y}+\frac{y^2}{x+y}
< x+y,
\]
therefore
\[
\frac{2N-1}{(N-1)^{\frac{N-1}{2N-1}}N^{\frac{N}{2N-1}}}> \frac{(N+1)^{\frac{N+1}{2N-1}}(N-2)^{\frac{N-2}{2N-1}}}{2N-1}.
\]
We can continue the estimation \eqref{E:case3_estimation} with
\[
\begin{split}
\fd z^N -\fb z+\fa z^{-N+1} &> \frac{(2N-1)}{2(N-1)^{\frac{N-1}{2N-1}}N^{\frac{N}{2N-1}}}
\left(\fd^{\frac{N-1}{2N-1}} \fa^{\frac{N}{2N-1}}-\frac{\fb^2}{\fd^{\frac{N+1}{2N-1}} \fa^{\frac{N-2}{2N-1}}}\right).
\end{split}
\]
Finally, we get the following bound
\begin{equation*}
    \fh-\fc > \frac{(2N-1)}{2(N-1)^{\frac{N-1}{2N-1}}N^{\frac{N}{2N-1}}}\cdot\frac{ \fd^{\frac{2N}{2N-1}}\fa^{\frac{2N-2}{2N-1}}-\fb^2}{\fd^{\frac{N+1}{2N-1}}\fa^{\frac{N-2}{2N-1}}},
\end{equation*}
in contrast with the assumptions.

\noindent \textbf{Condition (\ref{NE5}).}
Arguing as above but using \eqref{E:estim_x} instead of \eqref{E:b>0_Young_estimate} and recalling that $f(\bar{z}) = \min f$, we get
\[
\begin{split}
\fd z^N - \fb z+ \fa z^{-N+1}& \geq\frac 12\left(\fd \bar{z}^N + \fa \bar{z}^{-N+1}\right)+\left(\frac{\fd}{2} \bar{z}^N -\fb \bar{z}+\frac{\fa}{2} \bar{z}^{-N+1}\right)\\
&\geq \frac{(2N-1) \fd^{\frac{N-1}{2N-1}} \fa^{\frac{N}{2N-1}}}{2(N-1)^{\frac{N-1}{2N-1}}N^{\frac{N}{2N-1}}}-\frac{N-1}{N^\frac{N}{N-1}}\cdot\frac{\fb^\frac{N}{N-1}}{\left(\frac{\fd}{2}\right)^\frac{1}{N-1}}.
\end{split}
\]
Since $N\geq 3$, we have
\[
2^{2N-1}(N-1)^{(N-1)^2}
\le (N-1)^{2N-1+(N-1)^2}
=(N-1)^{N^2}
<N^{N^2}.
\]
Then,
\[
2^\frac{1}{N-1}(N-1)^{\frac{N-1}{2N-1}}<N^\frac{N^2}{(N-1)(2N-1)},
\]
or equivalently,
\[
2^\frac{1}{N-1}(N-1)^{\frac{N-1}{2N-1}}N^{\frac{N}{2N-1}}<N^\frac{N}{N-1}.
\]
Hence, since $2N-1>2(N-1)$, we get
\[
\frac{2N-1}{2^\frac{1}{N-1}(N-1)^{\frac{N-1}{2N-1}}N^{\frac{N}{2N-1}}}>\frac{2(N-1)}{N^\frac{N}{N-1}}
\]
from which
\begin{equation*}
\frac{2N-1}{2(N-1)^{\frac{N-1}{2N-1}}N^{\frac{N}{2N-1}}}
>\frac{2^\frac{1}{N-1}(N-1)}{N^\frac{N}{N-1}}.
\end{equation*}
Then, continuing the previous estimation, we obtain
\[
\mathfrak{h}-\mathfrak{c}\geq \mathfrak{d}z^N - \mathfrak{b}z + \mathfrak{a}z^{-N+1}> 
\frac{2N-1}{2(N-1)^{\frac{N-1}{2N-1}}N^{\frac{N}{2N-1}}}
\left(\mathfrak{d}^{\frac{N-1}{2N-1}} \mathfrak{a}^{\frac{N}{2N-1}}-\frac{\mathfrak{b}^\frac{N}{N-1}}{\mathfrak{d}^\frac{1}{N-1}}\right),
\]
which contradicts the assumptions.
\end{proof}

Note that applying the methods from the proof, one can also provide further conditions implying nonexistence of solutions. 

\section*{Acknowledgements}
Bartosz Bieganowski and Daniel Strzelecki are partially supported by the National Science Centre, Poland (Grant no. 2022/47/D/ST1/00487). Pietro d'Avenia and Jacopo Schino are members of GNAMPA (INdAM) and are partially supported by the GNAMPA project {\em Problemi di ottimizzazione in PDEs da mo\-del\-li biologici} (CUP E5324001950001).
Pietro d'Avenia is financed by European Union - Next Generation EU - PRIN 2022 PNRR P2022YFAJH {\em Linear and Nonlinear PDE's: New directions and Applications} and is also supported by the Italian Ministry of University and Research under the Program Department of Excellence L. 232/2016 (CUP D93C23000100001).
This paper has been completed during visits to the Politecnico di Bari and the University of Warsaw. The authors thank both institutions for the warm hospitality.

\bibliography{references}
\bibliographystyle{abbrv}
\end{document}